\newtheorem{remark}[theorem]{Remark}
\title{Stabilized mixed finite element methods for linear elasticity on simplicial grids in $\mathbb{R}^{\MakeLowercase{n}}$}%
\author{
Long Chen\thanks{Department of Mathematics, University of California at Irvine, Irvine, CA 92697, USA (chenlong@math.uci.edu). L. Chen was supported by  NSF Grant DMS-1418934. This work was finished when L. Chen visited Peking University in the fall of 2015. He would like to thank Peking University for the support and hospitality, as well as for their exciting research atmosphere.}
\and
Jun Hu\thanks{LMAM and School of Mathematical Sciences, Peking University, Beijing 100871, China (hujun@math.pku.edu.cn). The work of this author was supported by  the NSFC Projects 11625101, 11271035,  91430213 and 11421101.}
\and
Xuehai Huang\thanks{Corresponding author. College of Mathematics and Information Science, Wenzhou University, Wenzhou 325035, China (xuehaihuang@gmail.com). The work of this author was supported by the NSFC Projects 11301396 and 11671304, and Zhejiang Provincial
Natural Science Foundation of China Projects LY17A010010, LY15A010015, LY15A010016 and LY14A010020.}
}%
\begin{document}

\maketitle

% ----------------------------------------------------------------
\begin{abstract} In this paper, we design two classes of stabilized mixed finite element methods for linear elasticity on simplicial grids.
In the first class of elements, we use  $\boldsymbol{H}(\mathbf{div}, \Omega; \mathbb{S})$-$P_k$ and $\boldsymbol{L}^2(\Omega; \mathbb{R}^n)$-$P_{k-1}$ to approximate the stress and displacement spaces, respectively,  for $1\leq k\leq n$, and employ a stabilization  technique in terms  of
the jump of  the discrete displacement over the faces of the triangulation under consideration; in the second class of elements, we use $\boldsymbol{H}_0^1(\Omega; \mathbb{R}^n)$-$P_{k}$ to approximate the displacement space for $1\leq k\leq n$, and adopt the stabilization technique suggested by  Brezzi, Fortin, and  Marini. We establish the discrete inf-sup conditions,  and  consequently present the a priori error analysis for  them.  The main ingredient for the analysis is  two special interpolation operators, which can be constructed using a crucial  $\boldsymbol{H}(\mathbf{div})$ bubble function space of polynomials on each element. The feature of these  methods is the low number of global degrees of freedom in the lowest order case. We present  some numerical results  to demonstrate the theoretical estimates.
\end{abstract}

\begin{keywords}
linear elasticity, stabilized mixed finite element method, error analysis, simplicial grid, inf-sup condition
\end{keywords}

% ----------------------------------------------------------------

\section{Introduction}

Assume that $\Omega\subset \mathbb{R}^n$ is a bounded
polytope. Denote by $\mathbb{S}$ the space of all symmetric $n\times n$ tensors. The Hellinger-Reissner mixed formulation of the linear elasticity under the load $\boldsymbol{f}\in\boldsymbol{L}^2(\Omega; \mathbb{R}^n)$ is given as follows:
Find $(\boldsymbol{\sigma}, \boldsymbol{u})\in \boldsymbol{\Sigma}\times \boldsymbol{V}:=\boldsymbol{H}(\mathbf{div}, \Omega; \mathbb{S})\times\boldsymbol{L}^2(\Omega; \mathbb{R}^n)$ such that
\begin{align}
a(\boldsymbol{\sigma},\boldsymbol{\tau})+b(\boldsymbol{\tau}, \boldsymbol{u})&=0 \quad\quad\quad\quad\quad\quad \forall\,\boldsymbol{\tau}\in\boldsymbol{\Sigma}, \label{mixedform1} \\
-b(\boldsymbol{\sigma}, \boldsymbol{v})&=\int_{\Omega}\boldsymbol{f}\cdot\boldsymbol{v}\,{\rm d}x \quad\quad \forall\,\boldsymbol{v}\in \boldsymbol{V}, \label{mixedform2}
\end{align}
where
\[
a(\boldsymbol{\sigma},\boldsymbol{\tau}):=\int_{\Omega}\mathcal{A}\boldsymbol{\sigma}:\boldsymbol{\tau}\,{\rm d}x,\quad
b(\boldsymbol{\tau}, \boldsymbol{v}):= \int_{\Omega}\mathbf{div}\boldsymbol{\tau}\cdot\boldsymbol{v}\,{\rm d}x
\]
with $\mathcal{A}$ being the compliance tensor of fourth order defined
by
\[
\mathcal{A}\boldsymbol{\sigma}:=\frac{1}{2\mu}\left(\boldsymbol{\sigma}
-\frac{\lambda}{n\lambda+2\mu}(\textrm{tr}\boldsymbol{\sigma})\boldsymbol{\delta}\right).
\]
Here $\boldsymbol{\delta}:=(\delta_{ij})_{n\times n}$ is the
Kronecker tensor, tr is the trace operator, and positive constants $\lambda$ and $\mu$ are the
Lam$\acute{e}$ constants.
It is arduous to design $\boldsymbol{H}(\mathbf{div}, \Omega; \mathbb{S})$ conforming finite element with polynomial shape functions due to the symmetry requirement of the stress tensor.
Hence composite elements were one main choice to approximate the stress in the last century (cf. \cite{ArnoldDouglasGupta1984, Veubeke1965, JohnsonMercier1978, WatwoodHartz1968}).

In the early years of this century, Arnold and Winther constructed the first $\boldsymbol{H}(\mathbf{div}, \Omega; \mathbb{S})$ conforming mixed finite element with polynomial shape functions in two dimensions in \cite{ArnoldWinther2002}, which was extended to tetrahedral grids in three dimensions in \cite{AdamsCockburn2005,ArnoldAwanouWinther2008} and simplicial grids in any dimension as a byproduct in \cite{HuZhang2015b}. In those elements,
 the displacement space is approximated by $\boldsymbol{L}^2(\Omega; \mathbb{R}^n)$-$P_{k-1}$ while the stress space is approximated by
 the space of functions in $\boldsymbol{H}(\mathbf{div}, \Omega; \mathbb{S})$-$P_{k+n-1}$
whose divergence is in $\boldsymbol{L}^2(\Omega; \mathbb{R}^n)$-$P_{k-1}$ for $k\geq 2$.
Recently, Hu and Zhang showed that the more compact pair of $\boldsymbol{H}(\mathbf{div}, \Omega; \mathbb{S})$-$P_k$ and $\boldsymbol{L}^2(\Omega; \mathbb{R}^n)$-$P_{k-1}$ spaces is stable on triangular and tetrahedral grids for $k\geq n+1$ with $n=2, 3$ in \cite{HuZhang2015c, HuZhang2015}.
And Hu generalized those stable finite elements to simplicial grids in any dimension for $k\geq n+1$ in \cite{Hu2015a}.
 One key observation there is that the divergence space of the $\boldsymbol{H}(\mathbf{div})$ bubble function space of polynomials  on each element
  is just the orthogonal complement space of the piecewise rigid motion space with respect to the discrete displacement space. Then the discrete inf-sup condition was proved for $k\geq n+1$ through controlling the piecewise rigid motion space
by $\boldsymbol{H}^1(\Omega; \mathbb{S})$-$P_k$ space.
It is, however, troublesome to prove that the pair of $\boldsymbol{H}(\mathbf{div}, \Omega; \mathbb{S})$-$P_k$ and $\boldsymbol{L}^2(\Omega; \mathbb{R}^n)$-$P_{k-1}$ is still stable for $1\leq k\leq n$. For such a reason, Hu and Zhang enriched the $\boldsymbol{H}(\mathbf{div}, \Omega; \mathbb{S})$-$P_k$ space with $\boldsymbol{H}(\mathbf{div}, \Omega; \mathbb{S})$-$P_{n+1}$ face-bubble functions of piecewise polynomials for each $n-1$ dimensional simplex in \cite{HuZhang2015b}. Gong, Wu and Xu constructed two types of interior penalty mixed finite element methods by using nonconforming symmetric stress approximation  in \cite{GongWuXu2015}. The stability of those nonconforming mixed methods is ensured by $\boldsymbol{H}(\mathbf{div})$  nonconforming face-bubble spaces.
An interior penalty mixed finite element method using Crouzeix-Raviart nonconforming linear element to approximate the stress was studied in \cite{CaiYe2005}.
To get rid of the vertex degrees of freedom appeared in the  $\boldsymbol{H}(\mathbf{div}, \Omega; \mathbb{S})$-conforming elements and make the resulting mixed finite element methods hybridizable, nonconforming mixed elements on triangular and tetrahedral grids were developed in \cite{ArnoldWinther2003a,ArnoldAwanouWinther2014,GopalakrishnanGuzman2011, XieXu2011}.
On rectangular grids, we refer to \cite{ArnoldAwanou2005,Hu2015,HuManZhang2014,Awanou2012,ChenWang2011} for symmetric conforming mixed finite elements and \cite{HuShi2007a,ManHuShi2009,Yi2005,Yi2006} for symmetric nonconforming mixed finite elements.
For keeping the symmetry of the discrete stress space and relaxing the continuity across the interior faces of the triangulation,  many discontinuous Galerkin methods were proposed in \cite{Bustinza2006,CockburnSchotzauWang2006,ChenHuangHuangXu2010, HuangHuang2016,Huang2013a, HuangHuang2013a}, hybridizable discontinuous Galerkin methods in \cite{QiuShenShi2013, HarderMadureiraValentin2016}, weak Galerkin methods in \cite{WangWangWangZhang2016, ChenXie2016}, hybrid high-order method in \cite{DiPietroErn2015}. For the weakly symmetric mixed finite element methods for linear elasticity, we refer to \cite{AmaraThomas1979,ArnoldBrezziDouglas1984,Stenberg1988,Morley1989, ArnoldFalkWinther2007,BoffiBrezziFortin2009,QiuDemkowicz2009,CockburnGopalakrishnanGuzman2010,Guzman2010,QiuDemkowicz2011,GopalakrishnanGuzman2012,CockburnShi2013,ArnoldLee2014}.

In this paper, we intend to design stable mixed finite element methods for the linear elasticity using as few global degrees of freedom as possible.
To this end, two  classes of stabilized mixed finite element methods on simplicial grids in any dimension are proposed. In the first one, we use the pair of $\boldsymbol{H}(\mathbf{div}, \Omega; \mathbb{S})$-$P_k$ and $\boldsymbol{L}^2(\Omega; \mathbb{R}^n)$-$P_{k-1}$ constructed in \cite{Hu2015a} to approximate the stress and displacement for $1\leq k\leq n$. To simplify the notation, we shall use superscript $(\cdot)^{\rm div}$ for $\boldsymbol{H}(\mathbf{div}, \Omega; \mathbb{S})$ conforming elements, $(\cdot)^{-1}$ for discontinuous elements, and $(\cdot)^{0}$ for $\boldsymbol{H}^1(\Omega; \mathbb{R}^n)$ or $\boldsymbol{H}^1(\Omega; \mathbb{S})$ continuous elements. Instead of enriching $P_k^{\rm div}$ elements with face bubble functions of piecewise polynomials as in \cite{HuZhang2015b, GongWuXu2015}, we include a jump stabilization term into the  Hellinger-Reissner mixed formulation to make the discrete method stable, inspired by the discontinuous Galerkin methods constructed in \cite{ChenHuangHuangXu2010} for the linear elasticity problem.
The discrete inf-sup condition in a compact form is established with the help of a partial inf-sup condition \eqref{rmperp} estabilished in \cite{Hu2015a, HuZhang2015c, HuZhang2015} and a well-tailored interpolation operator for the stress.
Then we show the a priori error analysis for the resulting stabilized $P_k^{\rm div} - P_{k-1}^{-1}$ elements.

In the second class of stabilized mixed finite element methods, we adopt the stabilization technique suggested in \cite{BrezziFortinMarini1993} and use $\boldsymbol{H}_0^1(\Omega; \mathbb{R}^n)$-$P_{k}$ to approximate the displacement space. The merit of this stabilization technique is that the coercivity condition for the bilinear form related to the stress holds automatically, thus we only need to focus on the discrete inf-sup condition.
To recover the inf-sup condition, we first employ $\boldsymbol{H}^1(\Omega; \mathbb{S})$-$P_{k}$ enriched with $(k+1)$-th order $\boldsymbol{H}(\mathbf{div})$ bubble  function space of polynomials on each element to approximate the stress space. The discrete inf-sup condition is established by using another special interpolation operator for the stress.
The a priori error estimate for the $(P_k^0 + B_{k+1}^{\rm div}) - P_k^0$ is then derived by the standard theory of mixed finite element methods.
The rate of convergence for the displacement in $\boldsymbol{L}^2(\Omega; \mathbb{R}^n)$ norm is, however, suboptimal due to the coupling of stress error measured in $\boldsymbol{H}(\mathbf{div})$-norm.
To remedy this, we use $\boldsymbol{H}(\mathbf{div}, \Omega; \mathbb{S})$-$P_{k+1}$ to approximate the stress space instead. The resulting stable finite element pair is the Hood-Taylor type $P_{k+1}^{\rm div} - P_{k}^0$.
It was mentioned in \cite{BrezziFortinMarini1993} that it is not known if the Hood-Taylor element in \cite{TaylorHood1973,Boffi1994,Boffi1997} is stable for the linear elasticity. We solve this problem by enriching the Hood-Taylor element space $P_{k+1}^0$ for the stress with the same degree of $\boldsymbol{H}(\mathbf{div})$ bubble function space of polynomials on each element.

Note that the key component in constructing the previous two interpolation operators is the $\boldsymbol{H}(\mathbf{div})$ bubble function space of polynomials on each element.  We would like to mention that for the stress we obtain the optimal error estimates  in $\boldsymbol{H}(\mathbf{div}, \Omega; \mathbb{S})$ norm, whereas these error estimates in $\boldsymbol{L}^2(\Omega; \mathbb{S})$ norm are suboptimal. To the best of our knowledge, the global degrees of freedom of our  methods for the lowest order case $k=1$ are fewer than those of any existing mixed-type symmetric finite element method for the linear elasticity in the literature. To be specific, the global degrees of freedom for the stress and the displacement for the stabilized mixed finite element methods~\eqref{stabmfem1}-\eqref{stabmfem2}, \eqref{continuousmfem1}-\eqref{continuousmfem2} and \eqref{continuousfullmfem1}-\eqref{continuousfullmfem2} with $k=1$ are $\frac{n(n+1)}{2}|\mathcal{V}|+n|\mathcal{T}|$, $\frac{n(n+1)}{2}(|\mathcal{V}|+|\mathcal{T}|)+n|\mathcal{V}|$ and $\frac{n(n+1)}{2}|\mathcal{V}|+\frac{(n-1)(n+2)}{2}|\mathcal{E}|+\frac{n(n+1)}{2}|\mathcal{T}|+n|\mathcal{V}|$, respectively. Here $|\mathcal{V}|, |\mathcal{E}|, |\mathcal{T}|$ are the numbers of vertices, edges and elements of the triangulation.

When adopting the same degree of polynomial spaces for displacement,
the Taylor-Hood type elements $P_{k+1}^{\rm div} - P_{k}^0$ and the stabilized elements $P_{k+1}^{\rm div} - P_{k}^{-1}$ share the same convergence rate, which is one order higher than the stabilized elements $(P_k^0 + B_{k+1}^{\rm div}) - P_k^0$.
It is worth mentioning that to keep the same convergence rate, the stabilized elements $P_{k+1}^{\rm div} - P_{k}^{-1}$ need larger global degrees of freedom than the Taylor-Hood type elements $P_{k+1}^{\rm div} - P_{k}^0$.

%
%~\eqref{stabmfem1}-\eqref{stabmfem2} whose global degrees of freedom  will be $\frac{n(n+1)}{2}|\mathbb{V}|+\frac{(n-1)(n+2)}{2}|\mathbb{E}|+\frac{n(n+1)}{2}|\mathbb{T}|+(n+1)|\mathbb{T}|$, larger than those of the mixed method~\eqref{continuousfullmfem1}-\eqref{continuousfullmfem2} with $k=1$.

The rest of this article is organized as follows. We present some notations and definitions in Section 2 for later uses.
In Section 3, a stabilized mixed finite element method with discontinuous displacement for the linear elasticity is designed and analyzed. Then we propose a second class of stabilized mixed finite element methods with continuous displacement for the linear elasticity in Section 4.
In Section 5, some numerical experiments are given to demonstrate the theoretical results.

%In the purpose of designing a stable numerical method with fewest global degrees of freedom for the linear elasticity in the Hellinger-Reissner mixed formulation in any dimension, we still use the pair of $\boldsymbol{H}(\mathbf{div}, \Omega; \mathbb{S})$-$P_k$ and $\boldsymbol{L}^2(\Omega; \mathbb{R}^n)$-$P_{k-1}$ in \cite{Hu2015a} for $1\leq k\leq n$ in this paper.
%A stabilization term is included into the mixed formulation to make the discrete method stable.

%A stabilized mixed finite element method for linear elasticity on simplicial grids in $\mathbb{R}^{\MakeLowercase{n}}$ is study in this paper for $1\leq k\leq n$.

\section{Preliminaries}
%Denote by $\mathbb{S}$ the space of all symmetric $n\times n$ tensors (matrices).
Given a bounded domain $G\subset\mathbb{R}^{n}$ and a
non-negative integer $m$, let $H^m(G)$ be the usual Sobolev space of functions
on $G$, and $\boldsymbol{H}^m(G; \mathbb{X})$ be the usual Sobolev space of functions taking values in the finite-dimensional vector space $\mathbb{X}$ for $\mathbb{X}$ being $\mathbb{S}$ or $\mathbb{R}^n$. The corresponding norm and semi-norm are denoted respectively by
$\Vert\cdot\Vert_{m,G}$ and $|\cdot|_{m,G}$.  If $G$ is $\Omega$, we abbreviate
them by $\Vert\cdot\Vert_{m}$ and $|\cdot|_{m}$,
respectively. Let $\boldsymbol{H}_0^m(G; \mathbb{R}^n)$ be the closure of $\boldsymbol{C}_{0}^{\infty}(G; \mathbb{R}^n)$ with
respect to the norm $\Vert\cdot\Vert_{m,G}$.
Denote by $\boldsymbol{H}(\mathbf{div}, G; \mathbb{S})$ the Sobolev space of square-integrable symmetric tensor fields with square-integrable divergence.
For any $\boldsymbol{\tau}\in\boldsymbol{H}(\mathbf{div}, \Omega; \mathbb{S})$, we equip the following norm
\[
\|\boldsymbol{\tau}\|_{\boldsymbol{H}(\mathbf{div},\mathcal{A})}^2:=a(\boldsymbol{\tau},\boldsymbol{\tau})+\|\mathbf{div}\boldsymbol{\tau}\|_0^2.
\]
When $\boldsymbol{\tau}\in\boldsymbol{H}(\mathbf{div}, \Omega; \mathbb{S})$ satisfying $\int_{\Omega}\textrm{tr}\boldsymbol{\tau}\,{\rm d}x=0$, it follows from
Proposition~9.1.1 in \cite{BoffiBrezziFortin2013} that there exists a constant $C>0$ such that
\[
\|\boldsymbol{\tau}\|_{0}\leq C\|\boldsymbol{\tau}\|_{\boldsymbol{H}(\mathbf{div},\mathcal{A})},
\]
which means $\|\boldsymbol{\tau}\|_{\boldsymbol{H}(\mathbf{div},\mathcal{A})}$ and $\|\boldsymbol{\tau}\|_{\boldsymbol{H}(\mathbf{div})}$ are equivalent uniformly with respect to the
Lam$\acute{e}$ constant $\lambda$. Hence the norm $\|\cdot\|_{\boldsymbol{H}(\mathbf{div},\mathcal{A})}$ presented in all of the estimates in this paper can be
replaced by the norm $\|\cdot\|_{\boldsymbol{H}(\mathbf{div})}$.

Suppose the domain $\Omega$ is subdivided by a family of shape regular simplicial grids $\mathcal {T}_h$  (cf.\ \cite{BrennerScott2008,Ciarlet1978}) with $h:=\max\limits_{K\in \mathcal {T}_h}h_K$
and $h_K:=\mbox{diam}(K)$.
Let $\mathcal{F}_h$ be the union of all $n-1$ dimensional faces
of $\mathcal {T}_h$. For any $F\in\mathcal{F}_h$,
denote by $h_F$ its diameter.
Let $P_m(G)$ stand for the set of all
polynomials in $G$ with the total degree no more than $m$, and $\boldsymbol{P}_m(G; \mathbb{X})$ denote the tensor or vector version of $P_m(G)$ for $\mathbb{X}$ being $\mathbb{S}$ or $\mathbb{R}^n$, respectively.
Throughout this paper, we also use
``$\lesssim\cdots $" to mean that ``$\leq C\cdots$", where
$C$ is a generic positive constant independent of $h$ and the
Lam$\acute{e}$ constant $\lambda$,
which may take different values at different appearances.

Consider two adjacent simplices $K^+$ and $K^-$ sharing an interior face $F$.
Denote by $\boldsymbol{\nu}^+$ and $\boldsymbol{\nu}^-$ the unit outward normals
to the common face $F$ of the simplices $K^+$ and $K^-$, respectively.  For a
vector-valued function $\boldsymbol{w}$, write $\boldsymbol{w}^+:=\boldsymbol{w}|_{K^+}$ and $\boldsymbol{w}^-:=\boldsymbol{w}|_{K^-}$.
Then define a matrix-valued jump as
\[
\llbracket\boldsymbol{w}\rrbracket:=\frac{1}{2}\left(\boldsymbol{w}^+(\boldsymbol{\nu}^+)^T+\boldsymbol{\nu}^+(\boldsymbol{w}^+)^T+\boldsymbol{w}^-(\boldsymbol{\nu}^-)^T+\boldsymbol{\nu}^-(\boldsymbol{w}^-)^T\right).
\]
On a face $F$ lying on the boundary $\partial\Omega$, the above term is defined by
\[
\llbracket\boldsymbol{w}\rrbracket:=\frac{1}{2}\left(\boldsymbol{w}\boldsymbol{\nu}^T+\boldsymbol{\nu}\boldsymbol{w}^T\right).
\]

For each $K\in\mathcal{T}_h$, define an $\boldsymbol{H}(\mathbf{div}, K; \mathbb{S})$ bubble function space of polynomials of degree $k$ as
\[
\boldsymbol{B}_{K,k}:=\left\{\boldsymbol{\tau}\in \boldsymbol{P}_{k}(K; \mathbb{S}): \boldsymbol{\tau}\boldsymbol{\nu}|_{\partial K}=\boldsymbol{0}\right\}.
\]
It is easy to check that $\boldsymbol{B}_{K,1}$ is merely the zero space.
Denote the vertices of simplex $K$ by $\boldsymbol{x}_0, \cdots, \boldsymbol{x}_n$.
For any edge $\boldsymbol{x}_i\boldsymbol{x}_j$($i\neq j$) of element $K$, let $\boldsymbol{t}_{i,j}$ be the associated tangent vectors and
\[
\boldsymbol{T}_{i,j}:=\boldsymbol{t}_{i,j}\boldsymbol{t}_{i,j}^T,\quad 0\leq i<j\leq n.
\]
It follows from \cite{Hu2015a} that, for $k\geq 2$,
\[
\boldsymbol{B}_{K,k}=\sum_{0\leq i<j\leq n}\lambda_i\lambda_jP_{k-2}(K)\boldsymbol{T}_{i,j},
\]
where $\lambda_i$ is the associated barycentric coordinates corresponding to $\boldsymbol{x}_i$ for $i=0,\cdots,n$.
Some global finite element spaces are given by
\begin{align*}
&\boldsymbol{B}_{k,h}:=\left\{\boldsymbol{\tau}\in\boldsymbol{H}(\mathbf{div}, \Omega; \mathbb{S}):
\boldsymbol{\tau}|_K\in \boldsymbol{B}_{K,k} \quad \forall\,K\in\mathcal{T}_h \right\}, \\
&\widetilde{\boldsymbol{\Sigma}}_{k,h}:=\left\{\boldsymbol{\tau}\in\boldsymbol{H}^1(\Omega; \mathbb{S}):
\boldsymbol{\tau}|_K\in \boldsymbol{P}_{k}(K; \mathbb{S}) \quad \forall\,K\in\mathcal{T}_h \right\}, \\
&\boldsymbol{\Sigma}_{k,h}:=\widetilde{\boldsymbol{\Sigma}}_{k,h} + \boldsymbol{B}_{k,h},\label{spaceprop}\\
&\boldsymbol{V}_{k-1,h}:=\left\{\boldsymbol{v}\in \boldsymbol{L}^2(\Omega; \mathbb{R}^n): \boldsymbol{v}|_K\in \boldsymbol{P}_{k-1}(K; \mathbb{R}^n)\quad \forall\,K\in\mathcal
{T}_h\right\},
\end{align*}
with integer $ k\geq 1$. It follows from \cite{Hu2015a, HuZhang2015c, HuZhang2015} that
\begin{equation}\label{rmperp}
\boldsymbol{R}^{\perp}(K)= \mathbf{div}\boldsymbol{B}_{K,k} \quad \forall~K\in\mathcal{T}_h,
\end{equation}
where the local rigid motion space and its orthogonal complement space with respect to $\boldsymbol{P}_{k-1}(K; \mathbb{R}^n)$ on each simplex $K\in\mathcal{T}_h$ are defined as (cf. \cite{Hu2015a})
\begin{align*}
\boldsymbol{R}(K):= & \left\{\boldsymbol{v}\in \boldsymbol{H}^1(K; \mathbb{R}^n): \boldsymbol{\varepsilon}(\boldsymbol{v})=\boldsymbol{0}\right\}, \\
\boldsymbol{R}^{\perp}(K):= & \left\{\boldsymbol{v}\in \boldsymbol{P}_{k-1}(K; \mathbb{R}^n): \int_{K}\boldsymbol{v}\cdot\boldsymbol{w}\,{\rm d}x=0 \quad\forall~\boldsymbol{w}\in\boldsymbol{R}(K)\right\},
\end{align*}
with
$\boldsymbol{\varepsilon}(\boldsymbol{v}):=\left(\boldsymbol{\nabla}\boldsymbol{v}+(\boldsymbol{\nabla}\boldsymbol{v})^T\right)/2$ being the linearized strain tensor.

To introduce an elementwise $\boldsymbol{H}(\mathbf{div})$ bubble function interpolation operator, we first present the degrees of freedom for $\boldsymbol{\Sigma}_{k,h}$ which are slightly different from those given in \cite{Hu2015a}. For the ease of notation, we understand $P_{k} = \emptyset$ for negative integers $k$.
\begin{lemma}\label{lem:dofs}
A matrix field $\boldsymbol{\tau}\in\boldsymbol{P}_{k}(K; \mathbb{S})$ can be uniquely determined by the degrees of freedom from
\begin{enumerate}[(1)]
\item For each $\ell$ dimensional simplex $\Delta_{\ell}$ of $K$, $0\leq \ell\leq n-1$, with $\ell$ linearly independent
tangential vectors $\boldsymbol{t}_1, \cdots, \boldsymbol{t}_{\ell}$, and $n-\ell$ linearly independent normal vectors $\boldsymbol{\nu}_1, \cdots, \boldsymbol{\nu}_{n-\ell}$,
the mean moments of degree at most $k-\ell-1$ over $\Delta_{\ell}$, of $\boldsymbol{t}_l^T\boldsymbol{\tau}\boldsymbol{\nu}_i$, $\boldsymbol{\nu}_i^T\boldsymbol{\tau}\boldsymbol{\nu}_j$, $l=1,\cdots, \ell$, $i,j=1,\cdots, n-\ell$;
\item the values $\int_K \boldsymbol{\tau}:\boldsymbol{\varsigma}dx$ for any $\boldsymbol{\varsigma}\in\boldsymbol{P}_{k-2}(K; \mathbb{S})$.
\end{enumerate}
\end{lemma}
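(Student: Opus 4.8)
The plan is to establish unisolvence in the standard two-step manner: first confirm that the total number of proposed functionals equals $\dim\boldsymbol{P}_{k}(K;\mathbb{S})=\frac{n(n+1)}{2}\binom{n+k}{n}$, and then show that a field $\boldsymbol{\tau}\in\boldsymbol{P}_k(K;\mathbb{S})$ all of whose degrees of freedom vanish must be $\boldsymbol{0}$; injectivity plus equality of dimensions then yields bijectivity, i.e.\ unique determination. Since the boundary functionals in (1) are identical to those in \cite{Hu2015a}, the counting identity and the structural fact that they control precisely the normal traces are inherited from there; only the interior functionals in (2) have been modified, so the genuinely new content is to re-verify the interior step against $\boldsymbol{P}_{k-2}(K;\mathbb{S})$.

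For the boundary vanishing I would fix an $(n-1)$-dimensional face $F$ of $K$ with unit normal $\boldsymbol{\nu}_F$ and prove $\boldsymbol{\tau}\boldsymbol{\nu}_F|_F=\boldsymbol{0}$. First note that the conditions in (1) are independent of the frame $\{\boldsymbol{t}_l,\boldsymbol{\nu}_i\}$ chosen on each subsimplex, since a change of frame transforms them by an invertible matrix; hence on every subsimplex $\Delta_\ell\subseteq F$ (of dimension $0\le\ell\le n-1$) I may take $\boldsymbol{\nu}_1=\boldsymbol{\nu}_F$, which is legitimate because $\boldsymbol{\nu}_F\perp\Delta_\ell$. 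With this choice the functionals $\boldsymbol{t}_l^T\boldsymbol{\tau}\boldsymbol{\nu}_F$ and $\boldsymbol{\nu}_i^T\boldsymbol{\tau}\boldsymbol{\nu}_F$ on $\Delta_\ell$ are exactly the moments, up to degree $k-\ell-1$, of the components of the vector polynomial $\boldsymbol{\tau}\boldsymbol{\nu}_F|_F\in\boldsymbol{P}_k(F;\mathbb{R}^n)$ along the spanning frame $\{\boldsymbol{t}_1,\dots,\boldsymbol{t}_\ell,\boldsymbol{\nu}_1,\dots,\boldsymbol{\nu}_{n-\ell}\}$ of $\mathbb{R}^n$. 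Thus the vanishing of the $(1)$-functionals attached to $F$ and all its subsimplices is precisely the statement that the scalar Lagrange-type degrees of freedom, namely moments of degree $\le k-\ell-1$ on each $\ell$-subsimplex, vanish for every one of the $n$ components of $\boldsymbol{\tau}\boldsymbol{\nu}_F|_F$. Applying the unisolvence of the degree-$k$ scalar Lagrange element on the simplex $F$ componentwise gives $\boldsymbol{\tau}\boldsymbol{\nu}_F|_F=\boldsymbol{0}$, and running this over all faces yields $\boldsymbol{\tau}\boldsymbol{\nu}|_{\partial K}=\boldsymbol{0}$, i.e.\ $\boldsymbol{\tau}\in\boldsymbol{B}_{K,k}$.

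It remains to treat the interior. For $k\ge2$ I would use the representation $\boldsymbol{B}_{K,k}=\sum_{0\le i<j\le n}\lambda_i\lambda_j P_{k-2}(K)\boldsymbol{T}_{i,j}$. Because the $\frac{n(n+1)}{2}$ rank-one tensors $\{\boldsymbol{T}_{i,j}\}_{0\le i<j\le n}$ form a basis of $\mathbb{S}$ for a nondegenerate simplex and each $\lambda_i\lambda_j$ is not a zero divisor, this sum is direct: every $\boldsymbol{\tau}\in\boldsymbol{B}_{K,k}$ has a unique expansion $\boldsymbol{\tau}=\sum_{i<j}\lambda_i\lambda_j q_{ij}\boldsymbol{T}_{i,j}$ with $q_{ij}\in P_{k-2}(K)$, and $\dim\boldsymbol{B}_{K,k}=\frac{n(n+1)}{2}\dim P_{k-2}(K)=\dim\boldsymbol{P}_{k-2}(K;\mathbb{S})$, matching the number of functionals in (2). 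Letting $\{\boldsymbol{T}^{i,j}\}$ be the dual basis of $\{\boldsymbol{T}_{i,j}\}$ in $\mathbb{S}$ under the Frobenius inner product, I choose the admissible test field $\boldsymbol{\varsigma}=q_{ij}\boldsymbol{T}^{i,j}\in\boldsymbol{P}_{k-2}(K;\mathbb{S})$ in (2), which gives $0=\int_K\boldsymbol{\tau}:\boldsymbol{\varsigma}\,{\rm d}x=\int_K\lambda_i\lambda_j q_{ij}^2\,{\rm d}x$; since $\lambda_i\lambda_j>0$ in the interior of $K$ this forces $q_{ij}=0$ for every pair, whence $\boldsymbol{\tau}=\boldsymbol{0}$. (For $k=1$ the interior functionals and $\boldsymbol{B}_{K,1}$ are both empty, and the boundary step alone suffices.)

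The step I expect to be most delicate is the boundary reduction: one must verify that the frame-freedom genuinely permits aligning one normal with $\boldsymbol{\nu}_F$ on every subsimplex of $F$ simultaneously, that the resulting moment degrees $k-\ell-1$ coincide exactly with the scalar Lagrange bubble degrees on $F$, and that the bookkeeping of shared lower-dimensional subsimplices does not corrupt the global count. These are precisely the points the construction in \cite{Hu2015a} was engineered to satisfy, so I would lean on that reference for the counting identity while supplying the vanishing argument directly as above; the interior step, by contrast, is short and self-contained.
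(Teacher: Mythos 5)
Your proof is correct and follows the same route the paper intends by deferring to Theorems 2.1--2.2 of \cite{Hu2015a}: use the frame-invariance of the functionals in (1) to show they determine $\boldsymbol{\tau}\boldsymbol{\nu}|_{\partial K}$ componentwise via scalar Lagrange unisolvence on each face, reducing to $\boldsymbol{\tau}\in\boldsymbol{B}_{K,k}$, and then eliminate the bubble part with the interior moments. The one ingredient the citation does not literally cover --- unisolvence of the modified interior set (2) on $\boldsymbol{B}_{K,k}$ --- is exactly what needed checking, and your argument via the direct expansion $\boldsymbol{\tau}=\sum_{i<j}\lambda_i\lambda_j q_{ij}\boldsymbol{T}_{i,j}$, the dimension match $\dim\boldsymbol{B}_{K,k}=\dim\boldsymbol{P}_{k-2}(K;\mathbb{S})$, and the dual-basis test tensors $q_{ij}\boldsymbol{T}^{i,j}$ is sound.
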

\begin{proof}
This lemma can be proved by applying the arguments used in Theorems 2.1-2.2 in \cite{Hu2015a}.
\end{proof}

It is easy to see that we have the same first set of degrees of freedom as those in \cite{Hu2015a},  whereas the second set of degrees of freedom is different.

Now we present an elementwise $\boldsymbol{H}(\mathbf{div})$ bubble function interpolation operator.
Given $\boldsymbol{\tau}\in\boldsymbol{L}^{2}(\Omega; \mathbb{S})$, define $\boldsymbol{I}_{k,h}^b\boldsymbol{\tau}\in\boldsymbol{\Sigma}_{k,h}$ as follows:  on each simplex $K\in\mathcal{T}_h$,
\begin{itemize}
\item for any degree of freedom $D$ in the first set of degrees of freedom in Lemma~\ref{lem:dofs},
\[
D(\boldsymbol{I}_{k,h}^b\boldsymbol{\tau})=0,
\]
\item for any $\boldsymbol{\varsigma}\in\boldsymbol{P}_{k-2}(K; \mathbb{S})$,

\hfill \makebox[0pt][r]{%
\begin{minipage}[b]{\textwidth}
\begin{equation}\label{eq:Ihbdef}
\int_K \boldsymbol{I}_{k,h}^b\boldsymbol{\tau}:\boldsymbol{\varsigma}dx =\int_K \boldsymbol{\tau}:\boldsymbol{\varsigma}dx.
\end{equation}
\end{minipage}}
\end{itemize}
Since the first set of degrees of freedom in Lemma~\ref{lem:dofs} completely determines $\boldsymbol{\tau}\boldsymbol{\nu}$ on $\partial K$ for any $\boldsymbol{\tau}\in\boldsymbol{P}_{k}(K; \mathbb{S})$ (cf. \cite[Theorem~2.1]{Hu2015a}), thus $\boldsymbol{I}_{k,h}^b\boldsymbol{\tau}\in\boldsymbol{B}_{k,h}$. Applying scaling argument, we have for any $\boldsymbol{\tau}\in\boldsymbol{L}^{2}(\Omega; \mathbb{S})$
\begin{equation}\label{eq:Ihbbound}
\|\boldsymbol{I}_{k,h}^b\boldsymbol{\tau}\|_{0,K}\lesssim\|\boldsymbol{\tau}\|_{0,K} \quad \forall~K\in\mathcal{T}_h.
\end{equation}

\section{A stabilized mixed finite element method  with discontinuous displacement}

In this section, we devise a stabilized mixed finite element method for linear elasticity.
The pair of $\boldsymbol{H}(\mathbf{div}, \Omega; \mathbb{S})$-$P_k$ and $\boldsymbol{L}^2(\Omega; \mathbb{R}^n)$-$P_{k-1}$ is shown to be stable for $k\geq n+1$ in \cite{Hu2015a}. Here we consider the range $1\leq k\leq n$.

With previous preparation,
a stabilized mixed finite element method for linear elasticity is defined as follows:
Find $(\boldsymbol{\sigma}_h, \boldsymbol{u}_h)\in \boldsymbol{\Sigma}_{k,h}\times \boldsymbol{V}_{k-1,h}$ such that
\begin{align}
a(\boldsymbol{\sigma}_h,\boldsymbol{\tau}_h)+b(\boldsymbol{\tau}_h, \boldsymbol{u}_h)&=0 \quad\quad\quad\quad\quad\quad\quad\;\; \forall\,\boldsymbol{\tau}_h\in\boldsymbol{\Sigma}_{k,h}, \label{stabmfem1} \\
-b(\boldsymbol{\sigma}_h, \boldsymbol{v}_h) + c(\boldsymbol{u}_h, \boldsymbol{v}_h)&=\int_{\Omega}\boldsymbol{f}\cdot\boldsymbol{v}_h\,{\rm d}x \quad\quad\quad \forall\,\boldsymbol{v}_h\in \boldsymbol{V}_{k-1,h}, \label{stabmfem2}
\end{align}
where the jump stabilization term for the displacement
$
c(\boldsymbol{u}_h,\boldsymbol{v}_h):=\sum\limits_{F\in\mathcal{F}_h}h_{F}\int_{F}\llbracket\boldsymbol{u}_h\rrbracket:\llbracket\boldsymbol{v}_h\rrbracket\,{\rm d}s
$. With this jump stabilization term, a jump seminorm for $\boldsymbol{V}_{k-1,h}+ \boldsymbol{H}^{1}(\Omega; \mathbb{R}^n)$ is defined as
\[
\|\boldsymbol{v}_h\|_c^2:=c(\boldsymbol{v}_h,\boldsymbol{v}_h) \quad\quad \forall~\boldsymbol{v}_h\in\boldsymbol{V}_{k-1,h}+ \boldsymbol{H}^{1}(\Omega; \mathbb{R}^n).
\]
We also define the following two norms
\begin{align*}
\|\boldsymbol{\tau}\|_a^2&:=a(\boldsymbol{\tau}, \boldsymbol{\tau}) \quad\quad\quad\quad\quad\; \forall~\boldsymbol{\tau}\in\boldsymbol{L}^2(\Omega; \mathbb{S}), \\
\|\boldsymbol{v}_h\|_{0,c}^2&:=\|\boldsymbol{v}_h\|_{0}^2+\|\boldsymbol{v}_h\|_{c}^2 \quad\quad \forall~\boldsymbol{v}_h\in\boldsymbol{V}_{k-1,h}+ \boldsymbol{H}^{1}(\Omega; \mathbb{R}^n).
\end{align*}

%To demonstrate the wellposedness of the stabilized mixed finite element method \eqref{stabmfem1}-\eqref{stabmfem2}.

%\subsection{Error analysis}

Let $\boldsymbol{Q}_h$ be the $L^2$ orthogonal projection from $\boldsymbol{L}^2(\Omega; \mathbb{R}^n)$ onto $\boldsymbol{V}_{k-1,h}$. It holds the following error estimate (cf. \cite{Ciarlet1978, BrennerScott2008})
\begin{equation}\label{eq:L2projErrorEstimate}
\|\boldsymbol{v}-\boldsymbol{Q}_h\boldsymbol{v}\|_{0,K}+h_K^{1/2}\|\boldsymbol{v}-\boldsymbol{Q}_h\boldsymbol{v}\|_{0,\partial K}\lesssim h_K^{\min\{k,m\}}|\boldsymbol{v}|_{m,K} \quad \forall~\boldsymbol{v}\in \boldsymbol{H}^{m}(\Omega; \mathbb{R}^n)
\end{equation}
with integer $m\geq1$.
Let $\boldsymbol{I}_h^{SZ}$ be a tensorial or vectorial Scott-Zhang interpolation operator designed in \cite{ScottZhang1990}, which possesses the following error estimate
\begin{equation}\label{eq:IhSZErrorEstimate}
\sum_{K\in\mathcal{T}_h}h_K^{-2}\|\boldsymbol{\tau}-\boldsymbol{I}_h^{SZ}\boldsymbol{\tau}\|_{0,K}^2+|\boldsymbol{\tau}-\boldsymbol{I}_h^{SZ}\boldsymbol{\tau}|_{1}^2\lesssim h^{2\min\{k,m-1\}}\|\boldsymbol{\tau}\|_{m}^2
\end{equation}
for any $\boldsymbol{\tau}\in \boldsymbol{H}^{m}(\Omega; \mathbb{S})$ with integer $m\geq1$. Then for each $\boldsymbol{\tau}\in \boldsymbol{H}^{1}(\Omega; \mathbb{S})$,
define  $\boldsymbol{I}_h\boldsymbol{\tau}:=\boldsymbol{I}_h^{SZ}\boldsymbol{\tau}+\boldsymbol{I}_{k,h}^b(\boldsymbol{\tau}-\boldsymbol{I}_h^{SZ}\boldsymbol{\tau})$.
Apparently we have $\boldsymbol{I}_h\boldsymbol{\tau}\in\boldsymbol{\Sigma}_{k,h}$. And
it follows from \eqref{eq:Ihbdef}
\begin{equation}\label{eq:Ihprop}
\int_K (\boldsymbol{I}_h\boldsymbol{\tau}-\boldsymbol{\tau}):\boldsymbol{\varsigma}dx =0\quad \forall~\boldsymbol{\varsigma}\in\boldsymbol{P}_{k-2}(K; \mathbb{S}) \textrm{ and } K\in\mathcal{T}_h.
\end{equation}

\begin{lemma}\label{lm:I1}
Let integers $m,k \geq1$.
We have for any $\boldsymbol{\tau}\in \boldsymbol{H}^{m}(\Omega; \mathbb{S})$
\begin{equation}\label{eq:IhErrorEstimate}
\sum_{K\in\mathcal{T}_h}h_K^{-2}\left(\|\boldsymbol{\tau}-\boldsymbol{I}_h\boldsymbol{\tau}\|_{0,K}^2+h_K\|\boldsymbol{\tau}-\boldsymbol{I}_h\boldsymbol{\tau}\|_{0,\partial K}^2\right)\lesssim h^{2\min\{k,m-1\}}\|\boldsymbol{\tau}\|_{m}^2.
\end{equation}
\end{lemma}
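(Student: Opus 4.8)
The plan is to reduce everything to the Scott-Zhang estimate \eqref{eq:IhSZErrorEstimate} together with the bubble bound \eqref{eq:Ihbbound}. Writing $\boldsymbol{\eta}:=\boldsymbol{\tau}-\boldsymbol{I}_h^{SZ}\boldsymbol{\tau}$, the definition of $\boldsymbol{I}_h$ gives the clean identity $\boldsymbol{\tau}-\boldsymbol{I}_h\boldsymbol{\tau}=\boldsymbol{\eta}-\boldsymbol{I}_{k,h}^b\boldsymbol{\eta}$, so the whole left-hand side is an estimate for $\boldsymbol{\eta}-\boldsymbol{I}_{k,h}^b\boldsymbol{\eta}$ in terms of $\boldsymbol{\eta}$ alone. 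I would treat the interior term and the boundary term of the sum separately.

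For the interior term I would simply use the triangle inequality together with \eqref{eq:Ihbbound}: on each $K$,
\[
\|\boldsymbol{\tau}-\boldsymbol{I}_h\boldsymbol{\tau}\|_{0,K}\le\|\boldsymbol{\eta}\|_{0,K}+\|\boldsymbol{I}_{k,h}^b\boldsymbol{\eta}\|_{0,K}\lesssim\|\boldsymbol{\eta}\|_{0,K}.
\]
Multiplying by $h_K^{-2}$, summing over $K\in\mathcal{T}_h$, and invoking the volume part of \eqref{eq:IhSZErrorEstimate} immediately yields the desired bound on $\sum_K h_K^{-2}\|\boldsymbol{\tau}-\boldsymbol{I}_h\boldsymbol{\tau}\|_{0,K}^2$.

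The boundary term is where a little more care is needed, and I expect it to be the main (mild) obstacle. Here I would again split $\boldsymbol{\eta}-\boldsymbol{I}_{k,h}^b\boldsymbol{\eta}$ on $\partial K$ and estimate the two pieces with two different trace inequalities. For the Scott-Zhang residual $\boldsymbol{\eta}\in\boldsymbol{H}^1(K;\mathbb{S})$ I would use the standard shape-regular trace inequality
\[
\|\boldsymbol{\eta}\|_{0,\partial K}^2\lesssim h_K^{-1}\|\boldsymbol{\eta}\|_{0,K}^2+h_K|\boldsymbol{\eta}|_{1,K}^2,
\]
whereas for the bubble part $\boldsymbol{I}_{k,h}^b\boldsymbol{\eta}\in\boldsymbol{P}_k(K;\mathbb{S})$, being a polynomial, I would use the discrete trace (inverse) inequality $\|\boldsymbol{p}\|_{0,\partial K}^2\lesssim h_K^{-1}\|\boldsymbol{p}\|_{0,K}^2$ and then \eqref{eq:Ihbbound} to replace $\|\boldsymbol{I}_{k,h}^b\boldsymbol{\eta}\|_{0,K}$ by $\|\boldsymbol{\eta}\|_{0,K}$. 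Combining the two pieces gives, on each $K$,
\[
h_K^{-1}\|\boldsymbol{\tau}-\boldsymbol{I}_h\boldsymbol{\tau}\|_{0,\partial K}^2\lesssim h_K^{-2}\|\boldsymbol{\eta}\|_{0,K}^2+|\boldsymbol{\eta}|_{1,K}^2.
\]

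Finally, summing this over $K\in\mathcal{T}_h$ turns the right-hand side into $\sum_K h_K^{-2}\|\boldsymbol{\eta}\|_{0,K}^2+|\boldsymbol{\eta}|_1^2$, which is exactly the quantity controlled by the full Scott-Zhang estimate \eqref{eq:IhSZErrorEstimate}, hence bounded by $h^{2\min\{k,m-1\}}\|\boldsymbol{\tau}\|_m^2$. The only subtlety to track is matching the powers of $h_K$ correctly when passing between the continuous trace inequality (which costs one power of $h_K$ on the gradient term) and the inverse estimate for the polynomial bubble; once this bookkeeping is done, both contributions collapse onto the two quantities already estimated by Scott-Zhang, and nothing genuinely new remains to be proved.
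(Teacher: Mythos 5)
Your proposal is correct and follows essentially the same route as the paper's proof: the same splitting $\boldsymbol{\tau}-\boldsymbol{I}_h\boldsymbol{\tau}=(\boldsymbol{\tau}-\boldsymbol{I}_h^{SZ}\boldsymbol{\tau})-\boldsymbol{I}_{k,h}^b(\boldsymbol{\tau}-\boldsymbol{I}_h^{SZ}\boldsymbol{\tau})$, the stability bound \eqref{eq:Ihbbound} for the bubble correction, the scaled trace inequality on the Scott--Zhang residual, the inverse trace inequality on the polynomial bubble part, and finally \eqref{eq:IhSZErrorEstimate}. The power-of-$h_K$ bookkeeping you carry out is exactly the one in the paper, so nothing further is needed.
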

\begin{proof}
According to the triangle inequality and \eqref{eq:Ihbbound}, it holds
\[
\|\boldsymbol{\tau}-\boldsymbol{I}_h\boldsymbol{\tau}\|_{0,K}\leq \|\boldsymbol{\tau}-\boldsymbol{I}_h^{SZ}\boldsymbol{\tau}\|_{0,K}+\|\boldsymbol{I}_{k,h}^b(\boldsymbol{\tau}-\boldsymbol{I}_h^{SZ}\boldsymbol{\tau})\|_{0,K}\lesssim \|\boldsymbol{\tau}-\boldsymbol{I}_h^{SZ}\boldsymbol{\tau}\|_{0,K}.
\]
Analogously, we obtain from  the triangle inequality, the inverse inequality, the trace inequality and \eqref{eq:Ihbbound}
\begin{align*}
h_K^{1/2}\|\boldsymbol{\tau}-\boldsymbol{I}_h\boldsymbol{\tau}\|_{0,\partial K}\leq & h_K^{1/2}\|\boldsymbol{\tau}-\boldsymbol{I}_h^{SZ}\boldsymbol{\tau}\|_{0,\partial K} + h_K^{1/2}\|\boldsymbol{I}_{k,h}^b(\boldsymbol{\tau}-\boldsymbol{I}_h^{SZ}\boldsymbol{\tau})\|_{0,\partial K} \\
\lesssim & h_K^{1/2}\|\boldsymbol{\tau}-\boldsymbol{I}_h^{SZ}\boldsymbol{\tau}\|_{0,\partial K} + \|\boldsymbol{I}_{k,h}^b(\boldsymbol{\tau}-\boldsymbol{I}_h^{SZ}\boldsymbol{\tau})\|_{0,K} \\
\lesssim & \|\boldsymbol{\tau}-\boldsymbol{I}_h^{SZ}\boldsymbol{\tau}\|_{0,K} + h_K|\boldsymbol{\tau}-\boldsymbol{I}_h^{SZ}\boldsymbol{\tau}|_{1,K}.
\end{align*}
Thus the combination of the last two inequality and \eqref{eq:IhSZErrorEstimate} implies \eqref{eq:IhErrorEstimate}.
\end{proof}

To derive a discrete inf-sup condition for the stabilized mixed finite element method \eqref{stabmfem1}-\eqref{stabmfem2}, we rewrite it in a compact way:
Find $(\boldsymbol{\sigma}_h, \boldsymbol{u}_h)\in \boldsymbol{\Sigma}_{k,h}\times \boldsymbol{V}_{k-1,h}$ such that
\begin{equation}\label{stabmfemcompact}
\mathcal B(\boldsymbol{\sigma}_h, \boldsymbol{u}_h; \boldsymbol{\tau}_h, \boldsymbol{v}_h)=\int_{\Omega}\boldsymbol{f}\cdot\boldsymbol{v}_h\,{\rm d}x \quad \forall~(\boldsymbol{\tau}_h, \boldsymbol{v}_h)\in \boldsymbol{\Sigma}_{k,h}\times \boldsymbol{V}_{k-1,h},
\end{equation}
where
\[
\mathcal B(\boldsymbol{\sigma}_h, \boldsymbol{u}_h; \boldsymbol{\tau}_h, \boldsymbol{v}_h):=a(\boldsymbol{\sigma}_h,\boldsymbol{\tau}_h)+b(\boldsymbol{\tau}_h, \boldsymbol{u}_h)-b(\boldsymbol{\sigma}_h, \boldsymbol{v}_h) + c(\boldsymbol{u}_h, \boldsymbol{v}_h).
\]
Similarly, problem~\eqref{mixedform1}-\eqref{mixedform2} can be rewritten as
\begin{equation}\label{mixedformcompact}
\mathcal B(\boldsymbol{\sigma}, \boldsymbol{u}; \boldsymbol{\tau}, \boldsymbol{v})=\int_{\Omega}\boldsymbol{f}\cdot\boldsymbol{v}\,{\rm d}x \quad \forall~(\boldsymbol{\tau}, \boldsymbol{v})\in \boldsymbol{\Sigma}\times \boldsymbol{V}.
\end{equation}

Obviously the bilinear form $\mathcal B$ is continuous with respect to the norm $\|\cdot \|_{\boldsymbol{H}(\mathbf{div},\mathcal{A})} + \|\cdot\|_{0,c}$.
Now we present the following inf-sup condition for \eqref{stabmfemcompact}.
\begin{lemma}\label{infsup}
For any $(\widetilde{\boldsymbol{\sigma}}_h, \widetilde{\boldsymbol{u}}_h)\in \boldsymbol{\Sigma}_{k,h}\times \boldsymbol{V}_{k-1,h}$, it follows
\begin{equation}\label{eq:infsup}
\|\widetilde{\boldsymbol{\sigma}}_h\|_{\boldsymbol{H}(\mathbf{div},\mathcal{A})} + \|\widetilde{\boldsymbol{u}}_h\|_{0,c}\lesssim \sup_{(\boldsymbol{\tau}_h, \boldsymbol{v}_h)\in \boldsymbol{\Sigma}_{k,h}\times \boldsymbol{V}_{k-1,h}}\frac{\mathcal B(\widetilde{\boldsymbol{\sigma}}_h, \widetilde{\boldsymbol{u}}_h; \boldsymbol{\tau}_h, \boldsymbol{v}_h)}{\|\boldsymbol{\tau}_h\|_{\boldsymbol{H}(\mathbf{div},\mathcal{A})} + \|\boldsymbol{v}_h\|_{0,c}}.
\end{equation}
\end{lemma}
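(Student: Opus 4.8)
The plan is to prove \eqref{eq:infsup} by testing $\mathcal B$ against three carefully chosen pairs and combining the resulting estimates with Young's inequality. Write $S$ for the supremum on the right-hand side of \eqref{eq:infsup}, so that $\mathcal B(\widetilde{\boldsymbol{\sigma}}_h,\widetilde{\boldsymbol{u}}_h;\boldsymbol{\tau}_h,\boldsymbol{v}_h)\le S(\|\boldsymbol{\tau}_h\|_{\boldsymbol{H}(\mathbf{div},\mathcal{A})}+\|\boldsymbol{v}_h\|_{0,c})$ for every admissible $(\boldsymbol{\tau}_h,\boldsymbol{v}_h)$, and abbreviate $A:=\|\widetilde{\boldsymbol{\sigma}}_h\|_a$, $D:=\|\mathbf{div}\widetilde{\boldsymbol{\sigma}}_h\|_0$, $U:=\|\widetilde{\boldsymbol{u}}_h\|_0$, $C:=\|\widetilde{\boldsymbol{u}}_h\|_c$, so that the left-hand side of \eqref{eq:infsup} is comparable to $A+D+U+C$. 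A preliminary observation, used repeatedly, is that for any piecewise polynomial $\boldsymbol{v}_h$ the discrete trace and inverse inequalities give $\|\boldsymbol{v}_h\|_c\lesssim\|\boldsymbol{v}_h\|_0$ and hence $\|\boldsymbol{v}_h\|_{0,c}\lesssim\|\boldsymbol{v}_h\|_0$.

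First I would exploit the diagonal structure of $\mathcal B$: choosing $(\boldsymbol{\tau}_h,\boldsymbol{v}_h)=(\widetilde{\boldsymbol{\sigma}}_h,\widetilde{\boldsymbol{u}}_h)$ the two $b$-terms cancel and $\mathcal B(\widetilde{\boldsymbol{\sigma}}_h,\widetilde{\boldsymbol{u}}_h;\widetilde{\boldsymbol{\sigma}}_h,\widetilde{\boldsymbol{u}}_h)=A^2+C^2$, which yields $A^2+C^2\lesssim S\,(A+D+U+C)$. Next, to recover the divergence I would take $(\boldsymbol{\tau}_h,\boldsymbol{v}_h)=(\boldsymbol{0},-\mathbf{div}\widetilde{\boldsymbol{\sigma}}_h)$, which is admissible since $\mathbf{div}\widetilde{\boldsymbol{\sigma}}_h\in\boldsymbol{V}_{k-1,h}$; this gives $D^2=\mathcal B(\widetilde{\boldsymbol{\sigma}}_h,\widetilde{\boldsymbol{u}}_h;\boldsymbol{0},-\mathbf{div}\widetilde{\boldsymbol{\sigma}}_h)+c(\widetilde{\boldsymbol{u}}_h,\mathbf{div}\widetilde{\boldsymbol{\sigma}}_h)$, and after bounding $c(\widetilde{\boldsymbol{u}}_h,\mathbf{div}\widetilde{\boldsymbol{\sigma}}_h)\le C\,\|\mathbf{div}\widetilde{\boldsymbol{\sigma}}_h\|_c\lesssim C\,D$ together with $\|\mathbf{div}\widetilde{\boldsymbol{\sigma}}_h\|_{0,c}\lesssim D$ we arrive at $D\lesssim S+C$.

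The crucial and hardest step is the control of the full displacement norm $U$, including its piecewise rigid-motion component, which the divergence of the bubble space cannot see because of \eqref{rmperp}; here the tailored interpolation $\boldsymbol{I}_h$ enters. By the continuous inf-sup condition for the Hellinger-Reissner formulation (surjectivity of $\mathbf{div}\colon\boldsymbol{H}^1(\Omega;\mathbb{S})\to\boldsymbol{L}^2(\Omega;\mathbb{R}^n)$ with a bounded right inverse) I would pick $\boldsymbol{\tau}\in\boldsymbol{H}^1(\Omega;\mathbb{S})$ with $\mathbf{div}\boldsymbol{\tau}=\widetilde{\boldsymbol{u}}_h$ and $\|\boldsymbol{\tau}\|_1\lesssim U$, and test with $(\boldsymbol{I}_h\boldsymbol{\tau},\boldsymbol{0})$. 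The key identity is $b(\boldsymbol{I}_h\boldsymbol{\tau},\widetilde{\boldsymbol{u}}_h)=U^2+\int_\Omega\mathbf{div}(\boldsymbol{I}_h\boldsymbol{\tau}-\boldsymbol{\tau})\cdot\widetilde{\boldsymbol{u}}_h\,{\rm d}x$; integrating by parts elementwise, the volume contribution $\int_K(\boldsymbol{I}_h\boldsymbol{\tau}-\boldsymbol{\tau}):\boldsymbol{\varepsilon}(\widetilde{\boldsymbol{u}}_h)\,{\rm d}x$ vanishes because $\boldsymbol{\varepsilon}(\widetilde{\boldsymbol{u}}_h)|_K\in\boldsymbol{P}_{k-2}(K;\mathbb{S})$ together with the orthogonality \eqref{eq:Ihprop}, so only face terms survive. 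Using that $\boldsymbol{I}_h\boldsymbol{\tau}-\boldsymbol{\tau}$ has a single-valued normal trace ($\boldsymbol{H}(\mathbf{div})$-conformity) and the symmetry of the tensors, these face terms reassemble into $\sum_{F\in\mathcal{F}_h}\int_F(\boldsymbol{I}_h\boldsymbol{\tau}-\boldsymbol{\tau}):\llbracket\widetilde{\boldsymbol{u}}_h\rrbracket\,{\rm d}s$, which by Cauchy-Schwarz and Lemma~\ref{lm:I1} with $m=1$ is bounded by $\big(\sum_F h_F^{-1}\|\boldsymbol{I}_h\boldsymbol{\tau}-\boldsymbol{\tau}\|_{0,F}^2\big)^{1/2}C\lesssim\|\boldsymbol{\tau}\|_1\,C\lesssim U\,C$. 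I would also need $\|\boldsymbol{I}_h\boldsymbol{\tau}\|_{\boldsymbol{H}(\mathbf{div},\mathcal{A})}\lesssim U$: the $a$-part is immediate from \eqref{eq:Ihbbound} and the Scott-Zhang bound, while $\|\mathbf{div}\boldsymbol{I}_h\boldsymbol{\tau}\|_0\lesssim U$ follows by splitting $\boldsymbol{I}_h\boldsymbol{\tau}=\boldsymbol{I}_h^{SZ}\boldsymbol{\tau}+\boldsymbol{I}_{k,h}^b(\boldsymbol{\tau}-\boldsymbol{I}_h^{SZ}\boldsymbol{\tau})$, estimating the first divergence by $|\boldsymbol{I}_h^{SZ}\boldsymbol{\tau}|_1$ and the bubble divergence by an inverse inequality, \eqref{eq:Ihbbound} and \eqref{eq:IhSZErrorEstimate}. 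Putting these together, $U^2=\mathcal B(\widetilde{\boldsymbol{\sigma}}_h,\widetilde{\boldsymbol{u}}_h;\boldsymbol{I}_h\boldsymbol{\tau},\boldsymbol{0})-a(\widetilde{\boldsymbol{\sigma}}_h,\boldsymbol{I}_h\boldsymbol{\tau})-\int_\Omega\mathbf{div}(\boldsymbol{I}_h\boldsymbol{\tau}-\boldsymbol{\tau})\cdot\widetilde{\boldsymbol{u}}_h\,{\rm d}x\lesssim S\,U+A\,U+C\,U$, whence $U\lesssim S+A+C$.

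Finally I would assemble the three estimates: from $D\lesssim S+C$ and $U\lesssim S+A+C$ one gets $A+D+U+C\lesssim S+A+C$, and since $A+C\lesssim(A^2+C^2)^{1/2}\lesssim\big(S\,(A+D+U+C)\big)^{1/2}$ by the first step, a single application of Young's inequality absorbs the quadratic term and delivers $A+D+U+C\lesssim S$, i.e.\ \eqref{eq:infsup}. I expect the face-term manipulation in the displacement step---getting the boundary integrals to collapse exactly into the stabilization jump $\llbracket\widetilde{\boldsymbol{u}}_h\rrbracket$ and then matching the $h_F$-weights of $\|\cdot\|_c$ against the interpolation error of Lemma~\ref{lm:I1}---to be the main obstacle; the cancellation of the volume term via \eqref{eq:Ihprop} is what makes the whole argument close, and it is precisely the place where the $\boldsymbol{H}(\mathbf{div})$-bubble construction behind \eqref{rmperp} is essential.
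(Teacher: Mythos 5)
Your proof is correct, but it reaches the inf-sup condition by a noticeably different route than the paper. The shared core is the operator $\boldsymbol{I}_h$ and the integration-by-parts identity that converts $b(\boldsymbol{I}_h\boldsymbol{\tau}-\boldsymbol{\tau},\widetilde{\boldsymbol{u}}_h)$ into the face sum $\sum_{F}\int_F(\boldsymbol{I}_h\boldsymbol{\tau}-\boldsymbol{\tau}):\llbracket\widetilde{\boldsymbol{u}}_h\rrbracket\,{\rm d}s$ via the volume orthogonality \eqref{eq:Ihprop} (the paper performs exactly this computation for its $\boldsymbol{\tau}_2$), as well as the test $\boldsymbol{v}_h=-\mathbf{div}\widetilde{\boldsymbol{\sigma}}_h$ and the diagonal test. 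Where you diverge is in the treatment of $\|\widetilde{\boldsymbol{u}}_h\|_0$: the paper splits $\widetilde{\boldsymbol{u}}_h$ into its elementwise rigid-motion part and the orthogonal complement $\widetilde{\boldsymbol{u}}_h^{\perp}$, lifts $\widetilde{\boldsymbol{u}}_h^{\perp}$ locally by an $\boldsymbol{H}(\mathbf{div})$ bubble $\boldsymbol{\tau}_1$ using \eqref{rmperp}, and lifts only the remainder through the global $\boldsymbol{H}^1$ right inverse of $\mathbf{div}$ composed with $\boldsymbol{I}_h$; you instead lift the whole of $\widetilde{\boldsymbol{u}}_h$ globally in one step, so the partial inf-sup condition \eqref{rmperp} and the elementwise lifting \eqref{eq:tau1} are never invoked (the bubble space still enters, but only through the construction of $\boldsymbol{I}_{k,h}^b$ and the orthogonality \eqref{eq:Ihprop}). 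The second difference is organizational: the paper assembles a single test pair $\boldsymbol{\tau}_h=\widetilde{\boldsymbol{\sigma}}_h+\gamma_1\boldsymbol{\tau}_1+\gamma_2\boldsymbol{I}_h\boldsymbol{\tau}_2$, $\boldsymbol{v}_h=\widetilde{\boldsymbol{u}}_h-\gamma_3\mathbf{div}\widetilde{\boldsymbol{\sigma}}_h$ with explicitly tuned coefficients, whereas you test three times separately and close the argument algebraically with Young's inequality; both are legitimate ways to prove \eqref{eq:infsup}. Your version is leaner and shows that the rigid-motion decomposition is not strictly necessary for this lemma; the paper's version keeps the role of \eqref{rmperp} explicit and localizes as much of the lifting as possible, which is the structural point its introduction emphasizes. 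All the auxiliary bounds you flag ($\|\boldsymbol{v}_h\|_c\lesssim\|\boldsymbol{v}_h\|_0$ by discrete trace and inverse inequalities, $\|\boldsymbol{I}_h\boldsymbol{\tau}\|_{\boldsymbol{H}(\mathbf{div},\mathcal{A})}\lesssim\|\boldsymbol{\tau}\|_1$ via the splitting of $\boldsymbol{I}_h$ and an inverse estimate on the bubble part, and Lemma~\ref{lm:I1} with $m=1$ for the face terms) do hold, so no gap remains.
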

\begin{proof}
It is sufficient to prove that: for a given pair $(\widetilde{\boldsymbol{\sigma}}_h, \widetilde{\boldsymbol{u}}_h)\in \boldsymbol{\Sigma}_{k,h}\times \boldsymbol{V}_{k-1,h}$,
there exists $(\boldsymbol{\tau}_h, \boldsymbol{v}_h)\in \boldsymbol{\Sigma}_{k,h}\times \boldsymbol{V}_{k-1,h}$ such that
\begin{align}
\label{eq:thm1estimate1}
\|\widetilde{\boldsymbol{\sigma}}_h\|_{\boldsymbol{H}(\mathbf{div},\mathcal{A})}^2+\|\widetilde{\boldsymbol{u}}_h\|_{0,c}^2 &\lesssim B(\widetilde{\boldsymbol{\sigma}}_h, \widetilde{\boldsymbol{u}}_h; \boldsymbol{\tau}_h, \boldsymbol{v}_h), \quad \text{and }\\
\label{eq:thm1estimate2}
\|\boldsymbol{\tau}_h\|_{\boldsymbol{H}(\mathbf{div},\mathcal{A})}+\|\boldsymbol{v}_h\|_{0,c}& \lesssim  \|\widetilde{\boldsymbol{\sigma}}_h\|_{\boldsymbol{H}(\mathbf{div},\mathcal{A})}+\|\widetilde{\boldsymbol{u}}_h\|_{0,c}.
\end{align}

Let $\widetilde{\boldsymbol{u}}_h^{\perp}\in\boldsymbol{L}^2(\Omega; \mathbb{R}^n)$ such that $\widetilde{\boldsymbol{u}}_h^{\perp}|_K$ is the $L^2$-projection of $\widetilde{\boldsymbol{u}}_h|_K$ onto $\boldsymbol{R}^{\perp}(K)$ for each $K\in\mathcal{T}_h$.
Since \eqref{rmperp}, there exists $\boldsymbol{\tau}_1\in\boldsymbol{B}_{k,h}$ such that (cf. \cite[Lemma~3.3]{HuZhang2015} and \cite[Lemma~3.1]{Hu2015a})
\begin{equation}\label{eq:tau1}
\mathbf{div}\boldsymbol{\tau}_1=\widetilde{\boldsymbol{u}}_h^{\perp}, \quad \|\boldsymbol{\tau}_1\|_{H(\mathbf{div}, K)}\lesssim \|\widetilde{\boldsymbol{u}}_h^{\perp}\|_{0,K}.
\end{equation}
By the definition of $\boldsymbol{\tau}_1$, it holds
\[
\mathcal B(\widetilde{\boldsymbol{\sigma}}_h, \widetilde{\boldsymbol{u}}_h; \boldsymbol{\tau}_1, \boldsymbol{0})=a(\widetilde{\boldsymbol{\sigma}}_h, \boldsymbol{\tau}_1)+b(\boldsymbol{\tau}_1, \widetilde{\boldsymbol{u}}_h)=a(\widetilde{\boldsymbol{\sigma}}_h, \boldsymbol{\tau}_1)+(\widetilde{\boldsymbol{u}}_h^{\perp}, \widetilde{\boldsymbol{u}}_h)=a(\widetilde{\boldsymbol{\sigma}}_h, \boldsymbol{\tau}_1)+\|\widetilde{\boldsymbol{u}}_h^{\perp}\|_0^2.
\]
Thus by \eqref{eq:tau1}, there exists a constant $C_1>0$ such that
\begin{align}
\mathcal B(\widetilde{\boldsymbol{\sigma}}_h, \widetilde{\boldsymbol{u}}_h; \boldsymbol{\tau}_1, 0)\geq & -\|\widetilde{\boldsymbol{\sigma}}_h\|_a\|\boldsymbol{\tau}_1\|_a+\|\widetilde{\boldsymbol{u}}_h^{\perp}\|_0^2\geq -C_1\|\widetilde{\boldsymbol{\sigma}}_h\|_a\|\widetilde{\boldsymbol{u}}_h^{\perp}\|_0+\|\widetilde{\boldsymbol{u}}_h^{\perp}\|_0^2 \notag\\
\geq &- \frac{C_1^2}{2}\|\widetilde{\boldsymbol{\sigma}}_h\|_a^2+ \frac{1}{2}\|\widetilde{\boldsymbol{u}}_h^{\perp}\|_0^2. \label{eq:tau22}
\end{align}
On the other hand, there exists $\boldsymbol{\tau}_2\in\boldsymbol{H}_{0}^1(\Omega;\mathbb{S})$ such that (cf. \cite{ArnoldWinther2002, ArnoldAwanouWinther2008})
\begin{equation}\label{eq:tau4}
\mathbf{div}\boldsymbol{\tau}_2=\widetilde{\boldsymbol{u}}_h-\widetilde{\boldsymbol{u}}_h^{\perp}, \quad \text{and }\; \|\boldsymbol{\tau}_2\|_{1}\lesssim \|\widetilde{\boldsymbol{u}}_h-\widetilde{\boldsymbol{u}}_h^{\perp}\|_{0}.
\end{equation}
Thanks to \eqref{eq:Ihprop}, we have from integration by parts
\begin{align*}
b(\boldsymbol{I}_h\boldsymbol{\tau}_2, \widetilde{\boldsymbol{u}}_h)=&b(\boldsymbol{I}_h\boldsymbol{\tau}_2-\boldsymbol{\tau}_2, \widetilde{\boldsymbol{u}}_h)+b(\boldsymbol{\tau}_2, \widetilde{\boldsymbol{u}}_h) \\
=&\sum_{F\in\mathcal{F}_h}\int_F(\boldsymbol{I}_h\boldsymbol{\tau}_2-\boldsymbol{\tau}_2):\llbracket\widetilde{\boldsymbol{u}}_h\rrbracket ds+ \int_{\Omega}(\widetilde{\boldsymbol{u}}_h-\widetilde{\boldsymbol{u}}_h^{\perp})\cdot \widetilde{\boldsymbol{u}}_hdx \\
=&\sum_{F\in\mathcal{F}_h}\int_F(\boldsymbol{I}_h\boldsymbol{\tau}_2-\boldsymbol{\tau}_2):\llbracket\widetilde{\boldsymbol{u}}_h\rrbracket ds+ \|\widetilde{\boldsymbol{u}}_h-\widetilde{\boldsymbol{u}}_h^{\perp}\|_0^2+ \int_{\Omega}(\widetilde{\boldsymbol{u}}_h-\widetilde{\boldsymbol{u}}_h^{\perp})\cdot \widetilde{\boldsymbol{u}}_h^{\perp}dx.
\end{align*}
Together with \eqref{eq:IhErrorEstimate} and \eqref{eq:tau4}, there exists a constant $C_2>0$ such that
\begin{align}
\mathcal B(\widetilde{\boldsymbol{\sigma}}_h, \widetilde{\boldsymbol{u}}_h; \boldsymbol{I}_h\boldsymbol{\tau}_2, 0)=&a(\widetilde{\boldsymbol{\sigma}}_h, \boldsymbol{I}_h\boldsymbol{\tau}_2)+b(\boldsymbol{I}_h\boldsymbol{\tau}_2, \widetilde{\boldsymbol{u}}_h) \notag\\
\geq&\|\widetilde{\boldsymbol{u}}_h-\widetilde{\boldsymbol{u}}_h^{\perp}\|_0^2-C_2\|\widetilde{\boldsymbol{u}}_h-\widetilde{\boldsymbol{u}}_h^{\perp}\|_0(\|\widetilde{\boldsymbol{\sigma}}_h\|_a+\|\widetilde{\boldsymbol{u}}_h\|_c+\|\widetilde{\boldsymbol{u}}_h^{\perp}\|_0) \notag\\
\geq& \frac{1}{2}\|\widetilde{\boldsymbol{u}}_h-\widetilde{\boldsymbol{u}}_h^{\perp}\|_0^2-\frac{3}{2}C_2^2(\|\widetilde{\boldsymbol{\sigma}}_h\|_a^2+\|\widetilde{\boldsymbol{u}}_h\|_c^2+\|\widetilde{\boldsymbol{u}}_h^{\perp}\|_0^2). \label{eq:tau32}
\end{align}
Due to the inverse inequality, there exists a constant $C_3>0$ such that
\[
\|\mathbf{div}\widetilde{\boldsymbol{\sigma}}_h\|_c\leq C_3\|\mathbf{div}\widetilde{\boldsymbol{\sigma}}_h\|_0.
\]
Then we get from the Cauchy-Schwarz inequality
\begin{align}
\mathcal B(\widetilde{\boldsymbol{\sigma}}_h, \widetilde{\boldsymbol{u}}_h; \boldsymbol{0}, -\mathbf{div}\widetilde{\boldsymbol{\sigma}}_h)= & \|\mathbf{div}\widetilde{\boldsymbol{\sigma}}_h\|_0^2 - c(\widetilde{\boldsymbol{u}}_h,  \mathbf{div}\widetilde{\boldsymbol{\sigma}}_h)\geq \|\mathbf{div}\widetilde{\boldsymbol{\sigma}}_h\|_0^2 - \|\widetilde{\boldsymbol{u}}_h\|_c\|\mathbf{div}\widetilde{\boldsymbol{\sigma}}_h\|_c \notag\\
\geq & \|\mathbf{div}\widetilde{\boldsymbol{\sigma}}_h\|_0^2 - C_3\|\widetilde{\boldsymbol{u}}_h\|_c\|\mathbf{div}\widetilde{\boldsymbol{\sigma}}_h\|_0 \geq \frac{1}{2}\|\mathbf{div}\widetilde{\boldsymbol{\sigma}}_h\|_0^2 - \frac{C_3^2}{2}\|\widetilde{\boldsymbol{u}}_h\|_c^2. \label{eq:temp111}
\end{align}

Now take $\boldsymbol{\tau}_h=\widetilde{\boldsymbol{\sigma}}_h+\gamma_1\boldsymbol{\tau}_1+\gamma_2\boldsymbol{I}_h\boldsymbol{\tau}_2$ and $\boldsymbol{v}_h=\widetilde{\boldsymbol{u}}_h-\gamma_3\mathbf{div}\widetilde{\boldsymbol{\sigma}}_h$ where $\gamma_1$, $\gamma_2$ and $\gamma_3$ are three to-be-determined positive constants.
Then we get from \eqref{eq:tau22} and \eqref{eq:tau32}-\eqref{eq:temp111}
\begin{align*}
\mathcal B(\widetilde{\boldsymbol{\sigma}}_h, \widetilde{\boldsymbol{u}}_h; \boldsymbol{\tau}_h, \boldsymbol{v}_h)=&B(\widetilde{\boldsymbol{\sigma}}_h, \widetilde{\boldsymbol{u}}_h; \widetilde{\boldsymbol{\sigma}}_h, \widetilde{\boldsymbol{u}}_h)+\gamma_1B(\widetilde{\boldsymbol{\sigma}}_h, \widetilde{\boldsymbol{u}}_h; \boldsymbol{\tau}_1, 0) \\
&+\gamma_2B(\widetilde{\boldsymbol{\sigma}}_h, \widetilde{\boldsymbol{u}}_h; \boldsymbol{I}_h\boldsymbol{\tau}_2, 0) + \gamma_3B(\widetilde{\boldsymbol{\sigma}}_h, \widetilde{\boldsymbol{u}}_h; \boldsymbol{0}, -\mathbf{div}\widetilde{\boldsymbol{\sigma}}_h) \\
=&\|\widetilde{\boldsymbol{\sigma}}_h\|_a^2+\|\widetilde{\boldsymbol{u}}_h\|_c^2+\gamma_1B(\widetilde{\boldsymbol{\sigma}}_h, \widetilde{\boldsymbol{u}}_h; \boldsymbol{\tau}_1, 0)   \\
&+\gamma_2B(\widetilde{\boldsymbol{\sigma}}_h, \widetilde{\boldsymbol{u}}_h; \boldsymbol{I}_h\boldsymbol{\tau}_2, 0) + \gamma_3B(\widetilde{\boldsymbol{\sigma}}_h, \widetilde{\boldsymbol{u}}_h; \boldsymbol{0}, -\mathbf{div}\widetilde{\boldsymbol{\sigma}}_h) \\
\geq&\left(1-\gamma_1\frac{C_1^2}{2}-\gamma_2\frac{3C_2^2}{2}\right)\|\widetilde{\boldsymbol{\sigma}}_h\|_a^2 + \frac{\gamma_3}{2}\|\mathbf{div}\widetilde{\boldsymbol{\sigma}}_h\|_0^2 +\frac{\gamma_2}{2}\|\widetilde{\boldsymbol{u}}_h-\widetilde{\boldsymbol{u}}_h^{\perp}\|_0^2 \\
&+\left(\frac{\gamma_1}{2}-\gamma_2\frac{3C_2^2}{2}\right)\|\widetilde{\boldsymbol{u}}_h^{\perp}\|_0^2 +\left(1-\gamma_2\frac{3C_2^2}{2}-\gamma_3\frac{C_3^2}{2}\right)\|\widetilde{\boldsymbol{u}}_h\|_c^2 .
\end{align*}
Hence we acquire \eqref{eq:thm1estimate1} by choosing $\gamma_1=\frac{2}{3C_1^2}$, $\gamma_2=\min\{\frac{2}{9C_2^2}, \frac{\gamma_1}{1+3C_2^2}\}$ and $\gamma_3=\frac{2}{3C_3^2}$. And the estimate \eqref{eq:thm1estimate2} follows immediately from the definitions of $\boldsymbol{\tau}_h$ and $\boldsymbol{v}_h$.
\end{proof}

The unique solvability of the stabilized mixed finite element method~\eqref{stabmfem1}-\eqref{stabmfem2} is the immediate result of the inf-sup condition \eqref{eq:infsup}.

Next we show the a priori error analysis for the stabilized mixed finite element method~\eqref{stabmfem1}-\eqref{stabmfem2}.
Subtracting \eqref{stabmfemcompact} from \eqref{mixedformcompact}, we have the following error equation from the definition of $\boldsymbol{Q}_h$
\begin{equation}\label{eq:erroreqncombined}
\mathcal B(\boldsymbol{I}_h\boldsymbol{\sigma}-\boldsymbol{\sigma}_h, \boldsymbol{Q}_h\boldsymbol{u}-\boldsymbol{u}_h; \boldsymbol{\tau}_h, \boldsymbol{v}_h)=a(\boldsymbol{I}_h\boldsymbol{\sigma}-\boldsymbol{\sigma},\boldsymbol{\tau}_h)+b(\boldsymbol{\sigma}-\boldsymbol{I}_h\boldsymbol{\sigma}, \boldsymbol{v}_h)+c(\boldsymbol{Q}_h\boldsymbol{u}, \boldsymbol{v}_h)
\end{equation}
for any $(\boldsymbol{\tau}_h, \boldsymbol{v}_h)\in \boldsymbol{\Sigma}_{k,h}\times \boldsymbol{V}_{k-1,h}$.

\medskip
\begin{theorem}\label{thm:energyerror}
Let $(\boldsymbol{\sigma}, \boldsymbol{u})$ be the exact solution of problem~\eqref{mixedform1}-\eqref{mixedform2} and $(\boldsymbol{\sigma}_h, \boldsymbol{u}_h)$ the discrete solution of the stabilized mixed finite element method~\eqref{stabmfem1}-\eqref{stabmfem2} using $P_k^{\rm div} - P_{k-1}^{-1}$ elements. Assume that $\boldsymbol{\sigma}\in\boldsymbol{H}^{k+1}(\Omega; \mathbb{S})$ and $\boldsymbol{u}\in\boldsymbol{H}^{k}(\Omega; \mathbb{R}^n)$, then
\begin{equation}
\|\boldsymbol{\sigma}-\boldsymbol{\sigma}_h\|_{\boldsymbol{H}(\mathbf{div},\mathcal{A})} + \|\boldsymbol{u}-\boldsymbol{u}_h\|_{0,c}
\lesssim h^k\left (\|\boldsymbol{\sigma}\|_{k+1}+\|\boldsymbol{u}\|_{k}\right ).
\end{equation}
\end{theorem}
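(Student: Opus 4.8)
The plan is to follow the standard inf-sup/consistency route for stabilized mixed methods, splitting the total error through the interpolants $\boldsymbol{I}_h\boldsymbol{\sigma}$ and $\boldsymbol{Q}_h\boldsymbol{u}$ already at hand. Writing $\boldsymbol{\sigma}-\boldsymbol{\sigma}_h=(\boldsymbol{\sigma}-\boldsymbol{I}_h\boldsymbol{\sigma})+(\boldsymbol{I}_h\boldsymbol{\sigma}-\boldsymbol{\sigma}_h)$ and $\boldsymbol{u}-\boldsymbol{u}_h=(\boldsymbol{u}-\boldsymbol{Q}_h\boldsymbol{u})+(\boldsymbol{Q}_h\boldsymbol{u}-\boldsymbol{u}_h)$, the triangle inequality reduces the claim to two tasks: controlling the interpolation errors $\|\boldsymbol{\sigma}-\boldsymbol{I}_h\boldsymbol{\sigma}\|_{\boldsymbol{H}(\mathbf{div},\mathcal{A})}$ and $\|\boldsymbol{u}-\boldsymbol{Q}_h\boldsymbol{u}\|_{0,c}$, and controlling the discrete error $\|\boldsymbol{I}_h\boldsymbol{\sigma}-\boldsymbol{\sigma}_h\|_{\boldsymbol{H}(\mathbf{div},\mathcal{A})}+\|\boldsymbol{Q}_h\boldsymbol{u}-\boldsymbol{u}_h\|_{0,c}$. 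For the discrete error, since $(\boldsymbol{I}_h\boldsymbol{\sigma}-\boldsymbol{\sigma}_h,\boldsymbol{Q}_h\boldsymbol{u}-\boldsymbol{u}_h)\in\boldsymbol{\Sigma}_{k,h}\times\boldsymbol{V}_{k-1,h}$, I would invoke the discrete inf-sup condition of Lemma~\ref{infsup} and then replace the numerator $\mathcal B(\boldsymbol{I}_h\boldsymbol{\sigma}-\boldsymbol{\sigma}_h,\boldsymbol{Q}_h\boldsymbol{u}-\boldsymbol{u}_h;\boldsymbol{\tau}_h,\boldsymbol{v}_h)$ by the right-hand side of the error equation \eqref{eq:erroreqncombined}.

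It then remains to estimate the three consistency terms in \eqref{eq:erroreqncombined} against $\|\boldsymbol{\tau}_h\|_{\boldsymbol{H}(\mathbf{div},\mathcal{A})}+\|\boldsymbol{v}_h\|_{0,c}$. The term $a(\boldsymbol{I}_h\boldsymbol{\sigma}-\boldsymbol{\sigma},\boldsymbol{\tau}_h)$ is handled by Cauchy--Schwarz together with $\|\cdot\|_a\lesssim\|\cdot\|_0$ and the volume bound of Lemma~\ref{lm:I1}. For $b(\boldsymbol{\sigma}-\boldsymbol{I}_h\boldsymbol{\sigma},\boldsymbol{v}_h)$ the key device is property \eqref{eq:Ihprop}: after elementwise integration by parts the volume contribution $\int_K(\boldsymbol{\sigma}-\boldsymbol{I}_h\boldsymbol{\sigma}):\boldsymbol{\varepsilon}(\boldsymbol{v}_h)\,dx$ vanishes because $\boldsymbol{\varepsilon}(\boldsymbol{v}_h)\in\boldsymbol{P}_{k-2}(K;\mathbb{S})$, leaving only face terms which, by continuity of the normal traces of both $\boldsymbol{\sigma}$ and $\boldsymbol{I}_h\boldsymbol{\sigma}$ and the symmetry of the tensors, reassemble into $\sum_{F\in\mathcal{F}_h}\int_F(\boldsymbol{\sigma}-\boldsymbol{I}_h\boldsymbol{\sigma}):\llbracket\boldsymbol{v}_h\rrbracket\,ds$; an $h_F$-weighted Cauchy--Schwarz then produces the factor $\|\boldsymbol{v}_h\|_c$ times $(\sum_F h_F^{-1}\|\boldsymbol{\sigma}-\boldsymbol{I}_h\boldsymbol{\sigma}\|_{0,F}^2)^{1/2}\lesssim h^k\|\boldsymbol{\sigma}\|_{k+1}$, where the last bound comes from Lemma~\ref{lm:I1} with $m=k+1$. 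For the stabilization term I would use consistency: the exact displacement has vanishing jump, so $c(\boldsymbol{u},\boldsymbol{v}_h)=0$ and hence $c(\boldsymbol{Q}_h\boldsymbol{u},\boldsymbol{v}_h)=c(\boldsymbol{Q}_h\boldsymbol{u}-\boldsymbol{u},\boldsymbol{v}_h)\le\|\boldsymbol{Q}_h\boldsymbol{u}-\boldsymbol{u}\|_c\|\boldsymbol{v}_h\|_c$, with $\|\boldsymbol{Q}_h\boldsymbol{u}-\boldsymbol{u}\|_c\lesssim h^k\|\boldsymbol{u}\|_k$ from the trace part of \eqref{eq:L2projErrorEstimate}. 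Collecting the three bounds gives the discrete error $\lesssim h^k(\|\boldsymbol{\sigma}\|_{k+1}+\|\boldsymbol{u}\|_k)$.

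The displacement interpolation error is immediate, since $\|\boldsymbol{u}-\boldsymbol{Q}_h\boldsymbol{u}\|_0$ and $\|\boldsymbol{u}-\boldsymbol{Q}_h\boldsymbol{u}\|_c$ are both $O(h^k\|\boldsymbol{u}\|_k)$ by the two parts of \eqref{eq:L2projErrorEstimate}. The genuinely delicate piece, which I expect to be the main obstacle, is the divergence part of $\|\boldsymbol{\sigma}-\boldsymbol{I}_h\boldsymbol{\sigma}\|_{\boldsymbol{H}(\mathbf{div},\mathcal{A})}$: because $\boldsymbol{I}_h$ is not a commuting (Fortin) interpolation, $\mathbf{div}\boldsymbol{I}_h\boldsymbol{\sigma}$ need not equal $\boldsymbol{Q}_h\mathbf{div}\boldsymbol{\sigma}$. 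I would insert $\boldsymbol{Q}_h\mathbf{div}\boldsymbol{\sigma}$; the part $\|\mathbf{div}\boldsymbol{\sigma}-\boldsymbol{Q}_h\mathbf{div}\boldsymbol{\sigma}\|_0\lesssim h^k\|\boldsymbol{\sigma}\|_{k+1}$ is standard, while for $\boldsymbol{Q}_h\mathbf{div}\boldsymbol{\sigma}-\mathbf{div}\boldsymbol{I}_h\boldsymbol{\sigma}\in\boldsymbol{V}_{k-1,h}$ I would test against itself, apply the same elementwise integration-by-parts identity to reduce to the face terms $\sum_F\int_F(\boldsymbol{\sigma}-\boldsymbol{I}_h\boldsymbol{\sigma}):\llbracket\cdot\rrbracket\,ds$, estimate these by the jump seminorm, and finally absorb the seminorm back into the $L^2$ norm through the inverse inequality $\|\cdot\|_c\lesssim\|\cdot\|_0$ recorded just before \eqref{eq:temp111}. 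This again yields $O(h^k\|\boldsymbol{\sigma}\|_{k+1})$; since the $\|\cdot\|_a$ contribution of the stress interpolation error is even of order $h^{k+1}$, the full $\boldsymbol{H}(\mathbf{div},\mathcal{A})$ interpolation error is $O(h^k\|\boldsymbol{\sigma}\|_{k+1})$. Combining the discrete-error and interpolation-error bounds via the triangle inequality then delivers the asserted estimate.
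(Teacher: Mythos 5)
Your proposal is correct and follows essentially the same route as the paper: split the error through $\boldsymbol{I}_h\boldsymbol{\sigma}$ and $\boldsymbol{Q}_h\boldsymbol{u}$, apply the inf-sup condition of Lemma~\ref{infsup} to the discrete error pair, replace the numerator by the right-hand side of the error equation \eqref{eq:erroreqncombined}, and conclude with the triangle inequality and the interpolation estimates. The only deviation is that you expend extra machinery on two sub-steps the paper dispatches more directly --- the term $b(\boldsymbol{\sigma}-\boldsymbol{I}_h\boldsymbol{\sigma},\boldsymbol{v}_h)$ can simply be bounded by $\|\mathbf{div}(\boldsymbol{\sigma}-\boldsymbol{I}_h\boldsymbol{\sigma})\|_0\|\boldsymbol{v}_h\|_0$, and $\|\mathbf{div}(\boldsymbol{\sigma}-\boldsymbol{I}_h\boldsymbol{\sigma})\|_{0,K}$ follows from the triangle inequality plus an elementwise inverse estimate applied to the bubble correction $\boldsymbol{I}_{k,h}^b(\boldsymbol{\sigma}-\boldsymbol{I}_h^{SZ}\boldsymbol{\sigma})$ --- but your integration-by-parts/jump-seminorm versions of both steps are also valid.
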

\begin{proof}
Set $\widetilde{\boldsymbol{\sigma}}_h=\boldsymbol{I}_h\boldsymbol{\sigma}-\boldsymbol{\sigma}_h$ and $\widetilde{\boldsymbol{u}}_h=\boldsymbol{Q}_h\boldsymbol{u}-\boldsymbol{u}_h$ in Lemma~\ref{infsup}. We have from the error equation \eqref{eq:erroreqncombined}
\begin{align*}
&\|\boldsymbol{I}_h\boldsymbol{\sigma}-\boldsymbol{\sigma}_h\|_{\boldsymbol{H}(\mathbf{div},\mathcal{A})} + \|\boldsymbol{Q}_h\boldsymbol{u}-\boldsymbol{u}_h\|_{0,c} \\
\lesssim & \sup_{(\boldsymbol{\tau}_h, \boldsymbol{v}_h)\in \boldsymbol{\Sigma}_{k,h}\times \boldsymbol{V}_{k-1,h}}\frac{B(\boldsymbol{I}_h\boldsymbol{\sigma}-\boldsymbol{\sigma}_h, \boldsymbol{Q}_h\boldsymbol{u}-\boldsymbol{u}_h; \boldsymbol{\tau}_h, \boldsymbol{v}_h)}{\|\boldsymbol{\tau}_h\|_{\boldsymbol{H}(\mathbf{div},\mathcal{A})} + \|\boldsymbol{v}_h\|_{0,c}} \\
=& \sup_{(\boldsymbol{\tau}_h, \boldsymbol{v}_h)\in \boldsymbol{\Sigma}_{k,h}\times \boldsymbol{V}_{k-1,h}}\frac{a(\boldsymbol{I}_h\boldsymbol{\sigma}-\boldsymbol{\sigma},\boldsymbol{\tau}_h)+b(\boldsymbol{\sigma}-\boldsymbol{I}_h\boldsymbol{\sigma}, \boldsymbol{v}_h)+c(\boldsymbol{Q}_h\boldsymbol{u}-\boldsymbol{u}, \boldsymbol{v}_h)}{\|\boldsymbol{\tau}_h\|_{\boldsymbol{H}(\mathbf{div},\mathcal{A})} + \|\boldsymbol{v}_h\|_{0,c}} \\
\lesssim & \|\boldsymbol{\sigma}-\boldsymbol{I}_h\boldsymbol{\sigma}\|_{\boldsymbol{H}(\mathbf{div},\mathcal{A})} + \|\boldsymbol{u}-\boldsymbol{Q}_h\boldsymbol{u}\|_c.
\end{align*}
Hence we can finish the proof by using the triangle inequality and the interpolation error estimates.
\end{proof}

\section{Two stabilized mixed finite element methods with continuous displacement}

%In order to pursue fewer global degrees of freedom than those of the stabilized mixed finite element method~\eqref{stabmfem1}-\eqref{stabmfem2} at least in the lowest order case, we use Lagrange element to approximate the displacement in this section.
In this section, we will present another class of stabilized mixed finite element methods by a different stabilization mechanism suggested in \cite{BrezziFortinMarini1993}. To pursue a small number of global degrees of freedom, the displacement will be approximated by Lagrange elements.
To be specific, we adopt the following finite element spaces for stress and displacement
\[
\boldsymbol{\Sigma}_{k,h}^{\ast}:=\widetilde{\boldsymbol{\Sigma}}_{k,h} + \boldsymbol{B}_{k+1,h}, \quad \boldsymbol{W}_{k,h}:=\boldsymbol{V}_{k,h}\cap \boldsymbol{H}_0^1(\Omega; \mathbb{R}^n).
\]
Recurring to the stabilization technique in \cite{BrezziFortinMarini1993},
we devise
a stabilized mixed finite element method for linear elasticity as follows:
Find $(\boldsymbol{\sigma}_h, \boldsymbol{u}_h)\in \boldsymbol{\Sigma}_{k,h}^{\ast}\times \boldsymbol{W}_{k,h}$ such that
\begin{align}
a^{\ast}(\boldsymbol{\sigma}_h,\boldsymbol{\tau}_h)+b(\boldsymbol{\tau}_h, \boldsymbol{u}_h)=&-\int_{\Omega}\boldsymbol{f}\cdot\mathbf{div}\boldsymbol{\tau}_h\,{\rm d}x \quad\quad\; \forall\,\boldsymbol{\tau}_h\in\boldsymbol{\Sigma}_{k,h}^{\ast}, \label{continuousmfem1} \\
-b(\boldsymbol{\sigma}_h, \boldsymbol{v}_h)=&\int_{\Omega}\boldsymbol{f}\cdot\boldsymbol{v}_h\,{\rm d}x \quad\quad\quad\quad\quad \forall\,\boldsymbol{v}_h\in \boldsymbol{W}_{k,h}, \label{continuousmfem2}
\end{align}
where $a^{\ast}(\boldsymbol{\sigma}_h,\boldsymbol{\tau}_h):=a(\boldsymbol{\sigma}_h,\boldsymbol{\tau}_h)+\int_{\Omega}\mathbf{div}\boldsymbol{\sigma}_h\cdot\mathbf{div}\boldsymbol{\tau}_h\,{\rm d}x$.
The benefit of this stabilization technique is that the coercivity condition on $\boldsymbol{H}(\mathbf{div}, \Omega; \mathbb{S})$ with norm $\|\cdot\|_{\boldsymbol{H}(\mathbf{div},\mathcal{A})}$ for the bilinear form $a^{\ast}(\cdot,\cdot)$ holds automatically.

%A stabilized mixed finite element method is devised by using the stabilization technique suggested in \cite{BrezziFortinMarini1993}, in which

%A second stabilized mixed finite element method for linear elasticity is built in this section. For this,

We are now in the position to prove the discrete inf-sup condition for the stabilized mixed finite element method~\eqref{continuousmfem1}-\eqref{continuousmfem2}. For this, define an interpolation operator $\boldsymbol{I}_h^{\ast}: \boldsymbol{H}^{1}(\Omega; \mathbb{S})\to\boldsymbol{\Sigma}_{k,h}^{\ast}$ in the following way: for each $\boldsymbol{\tau}\in \boldsymbol{H}^{1}(\Omega; \mathbb{S})$, let
$\boldsymbol{I}_h^{\ast}\boldsymbol{\tau}:=\boldsymbol{I}_h^{SZ}\boldsymbol{\tau}+\boldsymbol{I}_{k+1,h}^b(\boldsymbol{\tau}-\boldsymbol{I}_h^{SZ}\boldsymbol{\tau})$.
As \eqref{eq:Ihprop}-\eqref{eq:IhErrorEstimate}, we have from \eqref{eq:Ihbdef}-\eqref{eq:Ihbbound} and \eqref{eq:IhSZErrorEstimate}
\begin{equation}\label{eq:Ihastprop}
\int_K (\boldsymbol{I}_h^{\ast}\boldsymbol{\tau}-\boldsymbol{\tau}):\boldsymbol{\varsigma}dx =0\quad \forall~\boldsymbol{\varsigma}\in\boldsymbol{P}_{k-1}(K; \mathbb{S}) \textrm{ and } K\in\mathcal{T}_h.
\end{equation}
Similar to Lemma \ref{lm:I1}, we have the following interpolation error estimate.
\begin{lemma}
 Let integers $m,k \geq1$.
We have for any $\boldsymbol{\tau}\in \boldsymbol{H}^{m}(\Omega; \mathbb{S})$
\begin{equation}\label{eq:IhastErrorEstimate}
\sum_{K\in\mathcal{T}_h}h_K^{-2}\|\boldsymbol{\tau}-\boldsymbol{I}_h^{\ast}\boldsymbol{\tau}\|_{0,K}^2+|\boldsymbol{\tau}-\boldsymbol{I}_h^{\ast}\boldsymbol{\tau}|_{1}^2\lesssim h^{2\min\{k,m-1\}}\|\boldsymbol{\tau}\|_{m}^2.
\end{equation}
%for any $\boldsymbol{\tau}\in \boldsymbol{H}^{m}(\Omega; \mathbb{S})$ with integer $m\geq1$.
\end{lemma}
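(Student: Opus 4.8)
The plan is to follow the proof of Lemma~\ref{lm:I1} almost verbatim, the only structural changes being that the boundary term is replaced by the full $H^1$ seminorm and that the bubble interpolation now has degree $k+1$ rather than $k$. Writing $\boldsymbol{e}:=\boldsymbol{\tau}-\boldsymbol{I}_h^{SZ}\boldsymbol{\tau}$ for the Scott-Zhang error, I would first split through the triangle inequality
\[
\|\boldsymbol{\tau}-\boldsymbol{I}_h^{\ast}\boldsymbol{\tau}\|_{0,K}\leq \|\boldsymbol{e}\|_{0,K}+\|\boldsymbol{I}_{k+1,h}^b\boldsymbol{e}\|_{0,K},
\]
and control the second summand by the $L^2$ stability \eqref{eq:Ihbbound} of $\boldsymbol{I}_{k+1,h}^b$ (which holds for any polynomial degree), giving $\|\boldsymbol{\tau}-\boldsymbol{I}_h^{\ast}\boldsymbol{\tau}\|_{0,K}\lesssim \|\boldsymbol{e}\|_{0,K}$. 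Summing over $K$ with the weight $h_K^{-2}$ and invoking \eqref{eq:IhSZErrorEstimate} then yields the desired bound for the first term on the left-hand side of \eqref{eq:IhastErrorEstimate}.

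For the seminorm term I would again split $|\boldsymbol{\tau}-\boldsymbol{I}_h^{\ast}\boldsymbol{\tau}|_{1,K}\leq |\boldsymbol{e}|_{1,K}+|\boldsymbol{I}_{k+1,h}^b\boldsymbol{e}|_{1,K}$. The Scott-Zhang contribution is handled directly by \eqref{eq:IhSZErrorEstimate}. For the bubble correction, since $\boldsymbol{I}_{k+1,h}^b\boldsymbol{e}$ is a piecewise polynomial, the inverse inequality gives $|\boldsymbol{I}_{k+1,h}^b\boldsymbol{e}|_{1,K}\lesssim h_K^{-1}\|\boldsymbol{I}_{k+1,h}^b\boldsymbol{e}\|_{0,K}$, and applying \eqref{eq:Ihbbound} once more bounds this by $h_K^{-1}\|\boldsymbol{e}\|_{0,K}$. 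Consequently
\[
|\boldsymbol{\tau}-\boldsymbol{I}_h^{\ast}\boldsymbol{\tau}|_{1}^2\lesssim |\boldsymbol{e}|_{1}^2+\sum_{K\in\mathcal{T}_h}h_K^{-2}\|\boldsymbol{e}\|_{0,K}^2,
\]
and both terms on the right are $\lesssim h^{2\min\{k,m-1\}}\|\boldsymbol{\tau}\|_m^2$ by \eqref{eq:IhSZErrorEstimate}, which completes the estimate.

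The proof is routine and I do not anticipate a genuine obstacle; the one point requiring slight care is the seminorm control of the bubble correction, where the operator $\boldsymbol{I}_{k+1,h}^b$ is only known to be $L^2$-stable via \eqref{eq:Ihbbound}, so the missing power of $h_K$ must be recovered through the inverse inequality on the finite-dimensional bubble space. This is exactly the device used for the trace term in Lemma~\ref{lm:I1}, and it is legitimate here because $\boldsymbol{I}_{k+1,h}^b\boldsymbol{e}$ lives in $\boldsymbol{B}_{k+1,h}$, a space of piecewise polynomials on the shape-regular grid. One should also note that replacing the degree $k$ by $k+1$ in the bubble operator changes none of the constants in \eqref{eq:Ihbbound}, so the rate $h^{\min\{k,m-1\}}$ is inherited unchanged from the Scott-Zhang estimate.
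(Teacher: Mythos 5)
Your proposal is correct and follows exactly the route the paper intends: the paper omits the proof, stating only that it is ``similar to Lemma~3.2,'' and your argument is precisely that adaptation --- triangle inequality, $L^2$-stability \eqref{eq:Ihbbound} of the bubble interpolation, the inverse inequality on $\boldsymbol{B}_{k+1,h}$ to absorb the bubble correction into the $H^1$ seminorm, and the Scott--Zhang estimate \eqref{eq:IhSZErrorEstimate}. No discrepancies to report.
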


Then using integration by parts and \eqref{eq:Ihastprop}, it holds
\begin{equation}\label{eq:bIhastprop}
b(\boldsymbol{I}_h^{\ast}\boldsymbol{\tau}, \boldsymbol{v}_h)=b(\boldsymbol{\tau}, \boldsymbol{v}_h) \quad \forall~\boldsymbol{\tau}\in \boldsymbol{H}^{1}(\Omega; \mathbb{S}) \textrm{ and } \boldsymbol{v}_h\in\boldsymbol{W}_{k,h}.
\end{equation}

\begin{lemma}
We have the following discrete inf-sup condition
\begin{equation}\label{eq:infsup2}
\|\boldsymbol{v}_h\|_0\lesssim \sup_{\boldsymbol{0}\neq \boldsymbol{\tau}_h\in\boldsymbol{\Sigma}_{k,h}^{\ast}}\frac{b(\boldsymbol{\tau}_h, \boldsymbol{v}_h)}{\|\boldsymbol{\tau}_h\|_{\boldsymbol{H}(\mathbf{div},\mathcal{A})}} \quad \forall~\boldsymbol{v}_h\in\boldsymbol{W}_{k,h}.
\end{equation}
\end{lemma}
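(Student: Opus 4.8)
The plan is to verify \eqref{eq:infsup2} by the classical Fortin-operator strategy: for each $\boldsymbol{v}_h\in\boldsymbol{W}_{k,h}$ I will exhibit a single competitor $\boldsymbol{\tau}_h\in\boldsymbol{\Sigma}_{k,h}^{\ast}$ for which the numerator $b(\boldsymbol{\tau}_h,\boldsymbol{v}_h)$ reproduces $\|\boldsymbol{v}_h\|_0^2$ while the denominator $\|\boldsymbol{\tau}_h\|_{\boldsymbol{H}(\mathbf{div},\mathcal{A})}$ stays controlled by $\|\boldsymbol{v}_h\|_0$. First, since $\boldsymbol{v}_h\in\boldsymbol{W}_{k,h}\subset\boldsymbol{L}^2(\Omega;\mathbb{R}^n)$, I invoke the continuous-level surjectivity of the symmetric divergence already used in \eqref{eq:tau4} (cf.\ \cite{ArnoldWinther2002,ArnoldAwanouWinther2008}): there exists $\boldsymbol{\tau}\in\boldsymbol{H}^1(\Omega;\mathbb{S})$ with $\mathbf{div}\boldsymbol{\tau}=\boldsymbol{v}_h$ and $\|\boldsymbol{\tau}\|_1\lesssim\|\boldsymbol{v}_h\|_0$. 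I then set $\boldsymbol{\tau}_h:=\boldsymbol{I}_h^{\ast}\boldsymbol{\tau}\in\boldsymbol{\Sigma}_{k,h}^{\ast}$, which is legitimate because $\boldsymbol{I}_h^{\ast}$ is defined on all of $\boldsymbol{H}^1(\Omega;\mathbb{S})$.

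With this choice the numerator is immediate from the commuting-type identity \eqref{eq:bIhastprop}: $b(\boldsymbol{\tau}_h,\boldsymbol{v}_h)=b(\boldsymbol{I}_h^{\ast}\boldsymbol{\tau},\boldsymbol{v}_h)=b(\boldsymbol{\tau},\boldsymbol{v}_h)=\int_\Omega\mathbf{div}\boldsymbol{\tau}\cdot\boldsymbol{v}_h\,{\rm d}x=\|\boldsymbol{v}_h\|_0^2$. It remains to bound $\|\boldsymbol{\tau}_h\|_{\boldsymbol{H}(\mathbf{div},\mathcal{A})}$. Since $a(\boldsymbol{\tau}_h,\boldsymbol{\tau}_h)\le\frac{1}{2\mu}\|\boldsymbol{\tau}_h\|_0^2$, it is enough to control $\|\boldsymbol{\tau}_h\|_0$ and $\|\mathbf{div}\boldsymbol{\tau}_h\|_0$ (this is exactly the $\lambda$-uniform equivalence of $\|\cdot\|_{\boldsymbol{H}(\mathbf{div},\mathcal{A})}$ and $\|\cdot\|_{\boldsymbol{H}(\mathbf{div})}$ recorded after Proposition~9.1.1 of \cite{BoffiBrezziFortin2013}). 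The $\boldsymbol{L}^2$ part is routine: the triangle inequality together with \eqref{eq:IhastErrorEstimate} for $m=1$ gives $\|\boldsymbol{\tau}_h\|_0\le\|\boldsymbol{\tau}\|_0+\|\boldsymbol{\tau}-\boldsymbol{I}_h^{\ast}\boldsymbol{\tau}\|_0\lesssim\|\boldsymbol{\tau}\|_1\lesssim\|\boldsymbol{v}_h\|_0$.

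The delicate term, and the step I expect to be the main obstacle, is $\|\mathbf{div}\boldsymbol{\tau}_h\|_0$, because $\boldsymbol{\tau}_h=\boldsymbol{I}_h^{SZ}\boldsymbol{\tau}+\boldsymbol{I}_{k+1,h}^b(\boldsymbol{\tau}-\boldsymbol{I}_h^{SZ}\boldsymbol{\tau})$ carries the $\boldsymbol{H}(\mathbf{div})$-bubble correction, and differentiating a bubble naively costs an inverse power of $h$. I will treat the two pieces elementwise. The Scott-Zhang part obeys $\|\mathbf{div}\boldsymbol{I}_h^{SZ}\boldsymbol{\tau}\|_0\lesssim|\boldsymbol{I}_h^{SZ}\boldsymbol{\tau}|_1\lesssim\|\boldsymbol{\tau}\|_1$ by \eqref{eq:IhSZErrorEstimate}. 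For the bubble part, writing $\boldsymbol{\eta}:=\boldsymbol{\tau}-\boldsymbol{I}_h^{SZ}\boldsymbol{\tau}$ and combining the inverse inequality $\|\mathbf{div}\boldsymbol{I}_{k+1,h}^b\boldsymbol{\eta}\|_{0,K}\lesssim h_K^{-1}\|\boldsymbol{I}_{k+1,h}^b\boldsymbol{\eta}\|_{0,K}$ on polynomials with the $\boldsymbol{L}^2$-stability \eqref{eq:Ihbbound} and the local estimate $\sum_{K}h_K^{-2}\|\boldsymbol{\eta}\|_{0,K}^2\lesssim\|\boldsymbol{\tau}\|_1^2$ from \eqref{eq:IhSZErrorEstimate}, the factor $h_K^{-1}$ is absorbed by the first-order approximation of $\boldsymbol{I}_h^{SZ}$, leaving $\|\mathbf{div}\boldsymbol{I}_{k+1,h}^b\boldsymbol{\eta}\|_0^2\lesssim\sum_{K}h_K^{-2}\|\boldsymbol{\eta}\|_{0,K}^2\lesssim\|\boldsymbol{\tau}\|_1^2$. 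Hence $\|\mathbf{div}\boldsymbol{\tau}_h\|_0\lesssim\|\boldsymbol{\tau}\|_1\lesssim\|\boldsymbol{v}_h\|_0$ and therefore $\|\boldsymbol{\tau}_h\|_{\boldsymbol{H}(\mathbf{div},\mathcal{A})}\lesssim\|\boldsymbol{v}_h\|_0$. Combining the three steps, for $\boldsymbol{v}_h\ne\boldsymbol{0}$ the supremum in \eqref{eq:infsup2} is at least $b(\boldsymbol{\tau}_h,\boldsymbol{v}_h)/\|\boldsymbol{\tau}_h\|_{\boldsymbol{H}(\mathbf{div},\mathcal{A})}\gtrsim\|\boldsymbol{v}_h\|_0^2/\|\boldsymbol{v}_h\|_0=\|\boldsymbol{v}_h\|_0$, which is the desired inf-sup estimate. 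The crux is the exact compatibility between the inverse estimate on the $\boldsymbol{H}(\mathbf{div})$-bubbles and the $O(h)$ accuracy of the auxiliary Scott-Zhang operator, which is precisely why $\boldsymbol{I}_h^{\ast}$ is built as a Scott-Zhang term corrected by a bubble term.
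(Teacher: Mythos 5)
Your proposal is correct and follows essentially the same route as the paper: choose $\boldsymbol{\tau}\in\boldsymbol{H}^1(\Omega;\mathbb{S})$ with $\mathbf{div}\boldsymbol{\tau}=\boldsymbol{v}_h$ and $\|\boldsymbol{\tau}\|_1\lesssim\|\boldsymbol{v}_h\|_0$, take $\boldsymbol{\tau}_h=\boldsymbol{I}_h^{\ast}\boldsymbol{\tau}$, use \eqref{eq:bIhastprop} for the numerator, and bound the denominator by $\|\boldsymbol{\tau}\|_1$. The only difference is that you spell out the stability bound $\|\boldsymbol{I}_h^{\ast}\boldsymbol{\tau}\|_{\boldsymbol{H}(\mathbf{div},\mathcal{A})}\lesssim\|\boldsymbol{\tau}\|_1$ (via the inverse inequality on the bubble correction balanced against the $O(h)$ accuracy of the Scott--Zhang part), a step the paper compresses into a single citation of \eqref{eq:IhastErrorEstimate}; your elaboration is valid and consistent with the paper's intent.
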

\begin{proof}
Let $\boldsymbol{v}_h\in\boldsymbol{W}_{k,h}$, then there exists a $\boldsymbol{\tau}\in\boldsymbol{H}^{1}(\Omega; \mathbb{S})$  such that (cf. \cite{ArnoldWinther2002, ArnoldAwanouWinther2008})
\[
\mathbf{div}\boldsymbol{\tau}=\boldsymbol{v}_h \;\textrm{ and } \;\|\boldsymbol{\tau}\|_1\lesssim \|\boldsymbol{v}_h\|_0.
\]
It follows from \eqref{eq:bIhastprop}
\begin{equation}\label{eq:temp9}
b(\boldsymbol{I}_h^{\ast}\boldsymbol{\tau}, \boldsymbol{v}_h)=b(\boldsymbol{\tau}, \boldsymbol{v}_h)=\|\boldsymbol{v}_h\|_0^2.
\end{equation}
On the other hand, we get from \eqref{eq:IhastErrorEstimate}
\begin{equation}\label{eq:temp10}
\|\boldsymbol{I}_h^{\ast}\boldsymbol{\tau}\|_{\boldsymbol{H}(\mathbf{div},\mathcal{A})}\lesssim \|\boldsymbol{\tau}\|_1\lesssim \|\boldsymbol{v}_h\|_0.
\end{equation}
Hence \eqref{eq:infsup2} is the immediate result of \eqref{eq:temp9}-\eqref{eq:temp10}.
\end{proof}

With previous preparation, we show the a priori error estimate for the $(P_k^0 + B_{k+1}^{\rm div}) - P_k^0$ elements.
\smallskip

\begin{theorem}\label{thm:energyerror2}
Let $(\boldsymbol{\sigma}, \boldsymbol{u})$ be the exact solution of problem~\eqref{mixedform1}-\eqref{mixedform2} and $(\boldsymbol{\sigma}_h, \boldsymbol{u}_h)$ the discrete solution of the stabilized mixed finite element method~\eqref{continuousmfem1}-\eqref{continuousmfem2} using $(P_k^0 + B_{k+1}^{\rm div}) - P_k^0$ element. Assume that $\boldsymbol{\sigma}\in\boldsymbol{H}^{k+1}(\Omega; \mathbb{S})$ and $\boldsymbol{u}\in\boldsymbol{H}^{k+1}(\Omega; \mathbb{R}^n)$, then
\begin{equation*}
\|\boldsymbol{\sigma}-\boldsymbol{\sigma}_h\|_{\boldsymbol{H} (\mathbf{div},\mathcal{A})} + \|\boldsymbol{u}-\boldsymbol{u}_h\|_0\lesssim h^k\left (\|\boldsymbol{\sigma}\|_{k+1}+h\|\boldsymbol{u}\|_{k+1} \right ).
\end{equation*}
\end{theorem}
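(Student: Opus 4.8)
The plan is to recognize \eqref{continuousmfem1}-\eqref{continuousmfem2} as a standard saddle-point problem and to invoke the abstract theory of mixed methods, since its two structural ingredients are already in hand. First I would record the consistency of the scheme. Because the exact stress obeys $\mathbf{div}\boldsymbol{\sigma}=-\boldsymbol{f}$ by \eqref{mixedform2}, inserting $(\boldsymbol{\sigma},\boldsymbol{u})$ into the left-hand sides of \eqref{continuousmfem1}-\eqref{continuousmfem2} reproduces exactly the right-hand sides there. Subtracting the discrete equations then yields the Galerkin orthogonalities $a^{\ast}(\boldsymbol{\sigma}-\boldsymbol{\sigma}_h,\boldsymbol{\tau}_h)+b(\boldsymbol{\tau}_h,\boldsymbol{u}-\boldsymbol{u}_h)=0$ for all $\boldsymbol{\tau}_h\in\boldsymbol{\Sigma}_{k,h}^{\ast}$ and $b(\boldsymbol{\sigma}-\boldsymbol{\sigma}_h,\boldsymbol{v}_h)=0$ for all $\boldsymbol{v}_h\in\boldsymbol{W}_{k,h}$.

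Next, the two Babu\v{s}ka--Brezzi conditions hold: $a^{\ast}$ is coercive and continuous on all of $\boldsymbol{\Sigma}_{k,h}^{\ast}$ --- indeed $a^{\ast}(\boldsymbol{\tau},\boldsymbol{\tau})=\|\boldsymbol{\tau}\|_{\boldsymbol{H}(\mathbf{div},\mathcal{A})}^2$, so both hold with constant one by Cauchy--Schwarz --- while $b$ satisfies the discrete inf-sup condition \eqref{eq:infsup2}. By the standard theory of mixed finite element methods this already gives the quasi-optimal estimate $\|\boldsymbol{\sigma}-\boldsymbol{\sigma}_h\|_{\boldsymbol{H}(\mathbf{div},\mathcal{A})}+\|\boldsymbol{u}-\boldsymbol{u}_h\|_0\lesssim\inf_{\boldsymbol{\tau}_h}\|\boldsymbol{\sigma}-\boldsymbol{\tau}_h\|_{\boldsymbol{H}(\mathbf{div},\mathcal{A})}+\inf_{\boldsymbol{v}_h}\|\boldsymbol{u}-\boldsymbol{v}_h\|_0$. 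If one prefers a self-contained derivation, I would set $\boldsymbol{\xi}_h:=\boldsymbol{I}_h^{\ast}\boldsymbol{\sigma}-\boldsymbol{\sigma}_h$ and use the commuting identity \eqref{eq:bIhastprop} together with the second error equation to deduce $b(\boldsymbol{\xi}_h,\boldsymbol{v}_h)=0$ for all $\boldsymbol{v}_h\in\boldsymbol{W}_{k,h}$; testing the first error equation with $\boldsymbol{\tau}_h=\boldsymbol{\xi}_h$ and using coercivity, I would bound $\|\boldsymbol{\xi}_h\|_{\boldsymbol{H}(\mathbf{div},\mathcal{A})}$ by $\|\boldsymbol{\sigma}-\boldsymbol{I}_h^{\ast}\boldsymbol{\sigma}\|_{\boldsymbol{H}(\mathbf{div},\mathcal{A})}$ plus a coupling term $b(\boldsymbol{\xi}_h,\boldsymbol{u}-\boldsymbol{w}_h)$ that is controlled by $\|\mathbf{div}\boldsymbol{\xi}_h\|_0\,\|\boldsymbol{u}-\boldsymbol{w}_h\|_0$ for an arbitrary $\boldsymbol{w}_h\in\boldsymbol{W}_{k,h}$; the displacement error is then recovered from the inf-sup \eqref{eq:infsup2} applied to $\boldsymbol{u}_h-\boldsymbol{w}_h$, again by way of the first error equation.

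With the abstract estimate in hand, I would insert the interpolants. Choosing $\boldsymbol{I}_h^{\ast}\boldsymbol{\sigma}$ and applying \eqref{eq:IhastErrorEstimate} with $m=k+1$ controls the $\boldsymbol{L}^2$ part by $h^k\|\boldsymbol{\sigma}\|_{k+1}$, while $\|\mathbf{div}(\boldsymbol{\sigma}-\boldsymbol{I}_h^{\ast}\boldsymbol{\sigma})\|_0\lesssim|\boldsymbol{\sigma}-\boldsymbol{I}_h^{\ast}\boldsymbol{\sigma}|_1\lesssim h^k\|\boldsymbol{\sigma}\|_{k+1}$, so that $\|\boldsymbol{\sigma}-\boldsymbol{I}_h^{\ast}\boldsymbol{\sigma}\|_{\boldsymbol{H}(\mathbf{div},\mathcal{A})}\lesssim h^k\|\boldsymbol{\sigma}\|_{k+1}$ after invoking the norm equivalence recorded in the preliminaries. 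For the displacement, a standard $\boldsymbol{W}_{k,h}$ best-approximation estimate (e.g.\ via a boundary-condition-preserving Scott--Zhang interpolant into the $P_k$ Lagrange space) gives $\inf_{\boldsymbol{v}_h}\|\boldsymbol{u}-\boldsymbol{v}_h\|_0\lesssim h^{k+1}\|\boldsymbol{u}\|_{k+1}$. Summing the two contributions produces $h^k\|\boldsymbol{\sigma}\|_{k+1}+h^{k+1}\|\boldsymbol{u}\|_{k+1}=h^k(\|\boldsymbol{\sigma}\|_{k+1}+h\|\boldsymbol{u}\|_{k+1})$, which is the claimed bound.

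The only genuinely delicate point --- and the source of the suboptimal displacement rate advertised in the introduction --- is that $\|\boldsymbol{u}-\boldsymbol{u}_h\|_0$ cannot be decoupled from the stress error: through the first error equation the displacement estimate necessarily absorbs the full $\|\boldsymbol{\sigma}-\boldsymbol{\sigma}_h\|_{\boldsymbol{H}(\mathbf{div},\mathcal{A})}$, which decays only like $h^k$ rather than like the $h^{k+1}$ that the $P_k$ approximation of $\boldsymbol{u}$ alone would permit. Apart from this structural loss the argument is routine; the chief technical care is to keep all constants independent of the Lam\'e parameter $\lambda$, which is guaranteed because the entire analysis is phrased in the $\|\cdot\|_{\boldsymbol{H}(\mathbf{div},\mathcal{A})}$ norm and by its uniform equivalence with $\|\cdot\|_{\boldsymbol{H}(\mathbf{div})}$.
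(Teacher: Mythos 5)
Your proposal is correct and follows essentially the same route as the paper: coercivity of $a^{\ast}$ in $\|\cdot\|_{\boldsymbol{H}(\mathbf{div},\mathcal{A})}$ plus the discrete inf-sup condition \eqref{eq:infsup2} yield the quasi-optimal estimate from standard mixed finite element theory, after which one inserts $\boldsymbol{\tau}_h=\boldsymbol{I}_h^{\ast}\boldsymbol{\sigma}$ and a Scott--Zhang interpolant of $\boldsymbol{u}$ and applies \eqref{eq:IhastErrorEstimate} and \eqref{eq:IhSZErrorEstimate}. The extra consistency check and the self-contained derivation you sketch are fine but not needed beyond what the abstract theory already provides.
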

\begin{proof}
The coercivity of the bilinear form $a^{\ast}(\cdot,\cdot)$ with respect to the norm $\|\cdot\|_{\boldsymbol{H}(\mathbf{div},\mathcal{A})}$ is trivial.
Together with the discrete inf-sup condition \eqref{eq:infsup2}, we obtain the following error estimate by the standard theory of mixed finite element methods (cf. \cite{Brezzi1974, BoffiBrezziFortin2013})
\[
\|\boldsymbol{\sigma}-\boldsymbol{\sigma}_h\|_{\boldsymbol{H}(\mathbf{div},\mathcal{A})} + \|\boldsymbol{u}-\boldsymbol{u}_h\|_0\lesssim \inf_{\boldsymbol{\tau}_h\in\boldsymbol{\Sigma}_{k,h}^{\ast}, \boldsymbol{v}_h\in \boldsymbol{W}_{k,h}}\left(\|\boldsymbol{\sigma}-\boldsymbol{\tau}_h\|_{\boldsymbol{H}(\mathbf{div},\mathcal{A})} + \|\boldsymbol{u}-\boldsymbol{v}_h\|_0\right).
\]
Choose $\boldsymbol{\tau}_h=\boldsymbol{I}_h^{\ast}\boldsymbol{\sigma}$ and $\boldsymbol{v}_h=\boldsymbol{I}_h^{SZ}\boldsymbol{u}$. Then we can finish the proof by combining the last inequality, \eqref{eq:IhastErrorEstimate} and \eqref{eq:IhSZErrorEstimate}.
\end{proof}

%Using similar duality argument as in the proof of Theorem~\ref{thm:L2error1} and similar regularity as \eqref{regularity}, we have the following error estimate for displacement in $L^2$ norm.
%\begin{theorem}\label{thm:L2error2}
%Let $(\boldsymbol{\sigma}, \boldsymbol{u})$ be the exact solution of problem~\eqref{mixedform1}-\eqref{mixedform2} and $(\boldsymbol{\sigma}_h, \boldsymbol{u}_h)$ the discrete solution of the stabilized mixed finite element method~\eqref{continuousmfem1}-\eqref{continuousmfem2}. Assume that $\boldsymbol{\sigma}\in\boldsymbol{H}^{k+1}(\Omega; \mathbb{S})$ and $\boldsymbol{u}\in\boldsymbol{H}^{k+1}(\Omega; \mathbb{R}^n)$, then
%\begin{equation}
%\|\boldsymbol{u}-\boldsymbol{u}_h\|_0\lesssim h^{k+1}(\|\boldsymbol{\sigma}\|_{k+1}+\|\boldsymbol{u}\|_{k+1}).
%\end{equation}
%\end{theorem}

To achieve the optimal convergence rate of $\|\boldsymbol{u}-\boldsymbol{u}_h\|_0$, we can further enrich the stress finite element space
 $\boldsymbol{\Sigma}_{k,h}^{\ast}$ to $\boldsymbol{\Sigma}_{k+1,h}$.
The resulting mixed finite element method is: Find $(\boldsymbol{\sigma}_h, \boldsymbol{u}_h)\in \boldsymbol{\Sigma}_{k+1,h}\times \boldsymbol{W}_{k,h}$ such that
\begin{align}
a^{\ast}(\boldsymbol{\sigma}_h,\boldsymbol{\tau}_h)+b(\boldsymbol{\tau}_h, \boldsymbol{u}_h)=&-\int_{\Omega}\boldsymbol{f}\cdot\mathbf{div}\boldsymbol{\tau}_h\,{\rm d}x \quad\quad\; \forall\,\boldsymbol{\tau}_h\in\boldsymbol{\Sigma}_{k+1,h}, \label{continuousfullmfem1} \\
-b(\boldsymbol{\sigma}_h, \boldsymbol{v}_h)=&\int_{\Omega}\boldsymbol{f}\cdot\boldsymbol{v}_h\,{\rm d}x \quad\quad\quad\quad\quad \forall\,\boldsymbol{v}_h\in \boldsymbol{W}_{k,h}, \label{continuousfullmfem2}
\end{align}

\smallskip
\begin{corollary}
Let $(\boldsymbol{\sigma}, \boldsymbol{u})$ be the exact solution of problem~\eqref{mixedform1}-\eqref{mixedform2} and $(\boldsymbol{\sigma}_h, \boldsymbol{u}_h)$ the discrete solution of the stabilized mixed finite element method~\eqref{continuousfullmfem1}-\eqref{continuousfullmfem2} using $P_{k+1}^{\rm div} - P_k^0$ element.
 Then under the assumption of $\boldsymbol{\sigma}\in\boldsymbol{H}^{k+2}(\Omega; \mathbb{S})$ and $\boldsymbol{u}\in\boldsymbol{H}^{k+1}(\Omega; \mathbb{R}^n)$, it follows
 \begin{equation}\label{eq:errorestimate4continuousfullmfem}
\|\boldsymbol{\sigma}-\boldsymbol{\sigma}_h\|_{\boldsymbol{H}(\mathbf{div},\mathcal{A})} + \|\boldsymbol{u}-\boldsymbol{u}_h\|_0\lesssim h^{k+1}(\|\boldsymbol{\sigma}\|_{k+2}+\|\boldsymbol{u}\|_{k+1}).
\end{equation}
\end{corollary}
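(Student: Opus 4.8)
The plan is to recognize that the method \eqref{continuousfullmfem1}-\eqref{continuousfullmfem2} is exactly the method \eqref{continuousmfem1}-\eqref{continuousmfem2} with the stress space enlarged from $\boldsymbol{\Sigma}_{k,h}^{\ast}=\widetilde{\boldsymbol{\Sigma}}_{k,h}+\boldsymbol{B}_{k+1,h}$ to $\boldsymbol{\Sigma}_{k+1,h}=\widetilde{\boldsymbol{\Sigma}}_{k+1,h}+\boldsymbol{B}_{k+1,h}$, while the displacement space $\boldsymbol{W}_{k,h}$ is left unchanged. Since $\widetilde{\boldsymbol{\Sigma}}_{k,h}\subset\widetilde{\boldsymbol{\Sigma}}_{k+1,h}$, this gives the inclusion $\boldsymbol{\Sigma}_{k,h}^{\ast}\subset\boldsymbol{\Sigma}_{k+1,h}$, which is the structural fact driving the whole argument. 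I would first transport the two hypotheses of the abstract mixed theory: the coercivity of $a^{\ast}(\cdot,\cdot)$ on $\boldsymbol{\Sigma}_{k+1,h}$ with respect to $\|\cdot\|_{\boldsymbol{H}(\mathbf{div},\mathcal{A})}$ holds verbatim as in the proof of Theorem~\ref{thm:energyerror2} (it is built into the stabilized bilinear form and demands nothing of the mesh or the discrete space), and the discrete inf-sup condition is then inherited for free.

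For the inf-sup step I would simply note that enlarging the test space cannot decrease the supremum: for every $\boldsymbol{v}_h\in\boldsymbol{W}_{k,h}$,
\[
\sup_{\boldsymbol{0}\neq\boldsymbol{\tau}_h\in\boldsymbol{\Sigma}_{k,h}^{\ast}}\frac{b(\boldsymbol{\tau}_h,\boldsymbol{v}_h)}{\|\boldsymbol{\tau}_h\|_{\boldsymbol{H}(\mathbf{div},\mathcal{A})}}\le\sup_{\boldsymbol{0}\neq\boldsymbol{\tau}_h\in\boldsymbol{\Sigma}_{k+1,h}}\frac{b(\boldsymbol{\tau}_h,\boldsymbol{v}_h)}{\|\boldsymbol{\tau}_h\|_{\boldsymbol{H}(\mathbf{div},\mathcal{A})}},
\]
so the already-established condition \eqref{eq:infsup2} over $\boldsymbol{\Sigma}_{k,h}^{\ast}$ immediately yields the same bound over $\boldsymbol{\Sigma}_{k+1,h}$. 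With coercivity and inf-sup in hand, the standard theory of mixed finite element methods invoked in Theorem~\ref{thm:energyerror2} delivers the quasi-optimal estimate $\|\boldsymbol{\sigma}-\boldsymbol{\sigma}_h\|_{\boldsymbol{H}(\mathbf{div},\mathcal{A})}+\|\boldsymbol{u}-\boldsymbol{u}_h\|_0\lesssim\inf_{\boldsymbol{\tau}_h\in\boldsymbol{\Sigma}_{k+1,h},\,\boldsymbol{v}_h\in\boldsymbol{W}_{k,h}}\left(\|\boldsymbol{\sigma}-\boldsymbol{\tau}_h\|_{\boldsymbol{H}(\mathbf{div},\mathcal{A})}+\|\boldsymbol{u}-\boldsymbol{v}_h\|_0\right)$; note that no commuting property of any interpolant is needed here, only best approximation.

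It then remains to bound the best-approximation error, where the enlarged stress space pays off. I would introduce the degree-$(k+1)$ analogue of $\boldsymbol{I}_h^{\ast}$, namely $\boldsymbol{I}_h^{SZ}\boldsymbol{\sigma}+\boldsymbol{I}_{k+1,h}^b(\boldsymbol{\sigma}-\boldsymbol{I}_h^{SZ}\boldsymbol{\sigma})$ built from a degree-$(k+1)$ Scott-Zhang operator, which lands in $\boldsymbol{\Sigma}_{k+1,h}$; repeating the reasoning behind \eqref{eq:IhastErrorEstimate} (triangle inequality, inverse inequality, and \eqref{eq:Ihbbound}) now with $m=k+2$ yields $\|\boldsymbol{\sigma}-\boldsymbol{\tau}_h\|_0\lesssim h^{k+2}\|\boldsymbol{\sigma}\|_{k+2}$ and $|\boldsymbol{\sigma}-\boldsymbol{\tau}_h|_1\lesssim h^{k+1}\|\boldsymbol{\sigma}\|_{k+2}$, hence $\|\mathbf{div}(\boldsymbol{\sigma}-\boldsymbol{\tau}_h)\|_0\lesssim h^{k+1}\|\boldsymbol{\sigma}\|_{k+2}$ and therefore $\|\boldsymbol{\sigma}-\boldsymbol{\tau}_h\|_{\boldsymbol{H}(\mathbf{div},\mathcal{A})}\lesssim h^{k+1}\|\boldsymbol{\sigma}\|_{k+2}$. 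For the displacement I would take $\boldsymbol{v}_h=\boldsymbol{I}_h^{SZ}\boldsymbol{u}\in\boldsymbol{W}_{k,h}$ and use \eqref{eq:IhSZErrorEstimate} with $m=k+1$ to get $\|\boldsymbol{u}-\boldsymbol{v}_h\|_0\lesssim h^{k+1}\|\boldsymbol{u}\|_{k+1}$. Substituting both choices into the quasi-optimal bound produces \eqref{eq:errorestimate4continuousfullmfem}.

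The routine work is minimal, and the only point deserving care, which I expect to be the main (mild) obstacle, is the balance between the two contributions to the $\boldsymbol{H}(\mathbf{div})$ stress error: the $\boldsymbol{L}^2$ part converges at the faster rate $h^{k+2}$, whereas the divergence part converges only at $h^{k+1}$, so the $\boldsymbol{H}(\mathbf{div})$-norm rate is dictated by the divergence and is precisely $h^{k+1}$. This is exactly why the hypothesis $\boldsymbol{\sigma}\in\boldsymbol{H}^{k+2}(\Omega;\mathbb{S})$, one order higher than in Theorem~\ref{thm:energyerror2}, is required, and it reflects the optimal-in-$\boldsymbol{H}(\mathbf{div})$ but suboptimal-in-$\boldsymbol{L}^2$ behavior of the stress already noted in the introduction.
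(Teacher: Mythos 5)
Your proof is correct and follows exactly the route the paper intends for this corollary: coercivity of $a^{\ast}$ is automatic, the inf-sup condition \eqref{eq:infsup2} is inherited because $\boldsymbol{\Sigma}_{k,h}^{\ast}\subset\boldsymbol{\Sigma}_{k+1,h}$ enlarges the test space, and the quasi-optimal bound is then closed with the degree-$(k+1)$ interpolants, the divergence term dictating the $h^{k+1}$ rate. Nothing is missing.
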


\begin{remark}
The finite element pair $\boldsymbol{\Sigma}_{k+1,h}\times \boldsymbol{W}_{k,h}$ is just the Hood-Taylor element in \cite{TaylorHood1973,Boffi1994,Boffi1997} augmented by the elementwise $\boldsymbol{H}(\mathbf{div})$ bubble function space $\boldsymbol{B}_{k+1,h}$.
Hence we give a positive answer to the question in \cite[Example~3.3]{BrezziFortinMarini1993} that whether the Hood-Taylor element is stable for the linear elasticity.
\end{remark}

\begin{remark}
In order to remain the right hand side of \eqref{continuousfullmfem1} to be zero as in \eqref{mixedform1} and \eqref{stabmfem1},
we can use the following stabilized mixed finite element method: Find $(\boldsymbol{\sigma}_h, \boldsymbol{u}_h)\in \boldsymbol{\Sigma}_{k+1,h}\times \boldsymbol{W}_{k,h}$ such that
\begin{align*}
a^{\circ}(\boldsymbol{\sigma}_h,\boldsymbol{\tau}_h)+b(\boldsymbol{\tau}_h, \boldsymbol{u}_h)=&0 \quad\quad\quad\quad\quad\quad\; \forall\,\boldsymbol{\tau}_h\in\boldsymbol{\Sigma}_{k+1,h}, \\%\label{continuousfulljumpfmfem1} \\
-b(\boldsymbol{\sigma}_h, \boldsymbol{v}_h)=&\int_{\Omega}\boldsymbol{f}\cdot\boldsymbol{v}_h\,{\rm d}x \quad\quad \forall\,\boldsymbol{v}_h\in \boldsymbol{W}_{k,h}, %\label{continuousfulljumpfmfem2}
\end{align*}
where
$
a^{\circ}(\boldsymbol{\sigma},\boldsymbol{\tau})=\int_{\Omega}\mathcal{A}\boldsymbol{\sigma}:\boldsymbol{\tau}\,{\rm d}x+\sum\limits_{F\in\mathcal{F}_h}h_F\int_{F}[\mathbf{div}\boldsymbol{\sigma}]\cdot[\mathbf{div}\boldsymbol{\tau}]\,{\rm d}s$.
For any $\boldsymbol{\tau}_h\in \boldsymbol{\Sigma}_{k+1,h}$, define norm $\interleave \boldsymbol{\tau}_h\interleave^2:=\|\boldsymbol{\tau}_h\|_{\boldsymbol{H}(\mathbf{div},\mathcal{A})}^2+\sum\limits_{F\in\mathcal{F}_h}h_F\|[\mathbf{div}\boldsymbol{\tau}_h]\|_{0,F}^2$.
It can be shown that $a^{\circ}(\cdot,\cdot)$ is coercive on the kernel space
\[
\boldsymbol{K}_h:=\{\boldsymbol{\tau}_h\in\boldsymbol{\Sigma}_{k+1,h}: b(\boldsymbol{\tau}_h, \boldsymbol{v}_h)=0\quad\forall~\boldsymbol{v}_h\in \boldsymbol{W}_{k,h}\},
\]
i.e.
\[
\interleave\boldsymbol{\tau}_h\interleave^2\lesssim a^{\circ}(\boldsymbol{\tau}_h,\boldsymbol{\tau}_h) \quad \forall~\boldsymbol{\tau}_h\in \boldsymbol{K}_h.
\]
And the discrete inf-sup condition can be derived from \eqref{eq:infsup2} and the inverse inequality.
\end{remark}

%\begin{remark}
%We can also discretize the original mixed formulation~\eqref{mixedform1}-\eqref{mixedform2} directly by using finite element spaces $\boldsymbol{\Sigma}_{k,h}^{\ast}$ and $\boldsymbol{W}_{k,h}$ as follows:
%Find $(\boldsymbol{\sigma}_h, \boldsymbol{u}_h)\in \boldsymbol{\Sigma}_{k,h}^{\ast}\times \boldsymbol{W}_{k,h}$ such that
%\begin{align}
%&a(\boldsymbol{\sigma}_h,\boldsymbol{\tau})+b(\boldsymbol{\tau}, \boldsymbol{u}_h)=0 \quad\quad\quad \forall\,\boldsymbol{\tau}\in\boldsymbol{\Sigma}_{k,h}^{\ast}, \label{mfem1} \\
%&-b(\boldsymbol{\sigma}_h, \boldsymbol{v})=\int_{\Omega}\boldsymbol{f}\cdot\boldsymbol{v}\,{\rm d}x \quad\quad\;\; \forall\,\boldsymbol{v}\in \boldsymbol{W}_{k,h}. \label{mfem2}
%\end{align}
%The wellposedness of mixed method \eqref{mfem1}-\eqref{mfem2} is guaranteed by the inf-sup condition \eqref{eq:infsup2}. Assume that $\boldsymbol{\sigma}\in\boldsymbol{H}^{k+1}(\Omega; \mathbb{S})$ and $\boldsymbol{u}\in\boldsymbol{H}^{k+1}(\Omega; \mathbb{R}^n)$, then
%\[
%\|\boldsymbol{\sigma}-\boldsymbol{\sigma}_h\|_{a} + \|\boldsymbol{u}-\boldsymbol{u}_h\|_0\lesssim h^k(h\|\boldsymbol{\sigma}\|_{k+1}+\|\boldsymbol{u}\|_{k+1}),
%\]
%\[
%\|\boldsymbol{u}-\boldsymbol{u}_h\|_0\lesssim h^{k+1}(\|\boldsymbol{\sigma}\|_{k+1}+\|\boldsymbol{u}\|_{k+1}).
%\]
%However the constants appeared in the last two inequalities depend on the Lam$\acute{e}$ constant $\lambda$.
%Hence the mixed finite element method~\eqref{mfem1}-\eqref{mfem2} is unsuitable for the nearly incompressible linear elasticity.
%\end{remark}

\section{Numerical Results}

In this section, we will report some numerical results to assess the accuracy and behavior of the stabilized mixed finite element methods developed in Sections~3-4. Let $\lambda=0.3$ and $\mu=0.35$. We use the uniform triangulation $\mathcal{T}_h$ of $\Omega$.

First we test our stabilized mixed finite element methods for the pure displacement problem on the square $\Omega=(-1,1)^2$ in 2D.
Take
\begin{align*}
\boldsymbol{f}(x_1,x_2)=&\left(\begin{array}{l}
-8(x_1 + x_2)\left((3x_1x_2-2)(x_1^2+x_2^2)+5(x_1x_2-1)^2-2x_1^2x_2^2\right) \\
-8(x_1 - x_2)\left((3x_1x_2+2)(x_1^2+x_2^2)-5(x_1x_2+1)^2+2x_1^2x_2^2\right)
\end{array}\right).
\end{align*}
It can be verified that the exact displacement of problem~\eqref{mixedform1}-\eqref{mixedform2} is
\[
\boldsymbol{u}(x_1,x_2)=\frac{80}{7}\left(\begin{array}{l}
-x_2(1-x_2^2)(1-x_1^2)^2 \\
x_1(1-x_1^2)(1-x_2^2)^2
\end{array}\right)-4\left(\begin{array}{l}
x_1(1-x_1^2)(1-x_2^2)^2 \\
x_2(1-x_2^2)(1-x_1^2)^2
\end{array}\right).
\]
And the exact stress can be computed by $\boldsymbol{\sigma}=2\mu\boldsymbol{\varepsilon}(\boldsymbol{u})+\lambda(\textrm{tr}\boldsymbol{\varepsilon}(\boldsymbol{u}))\boldsymbol{\delta}$.

The element diagram in Fig.~\ref{fig: dofsStressk2n2} is mnemonic of the local degrees of freedom of $\boldsymbol{\Sigma}_{2,h}$ in 2D.
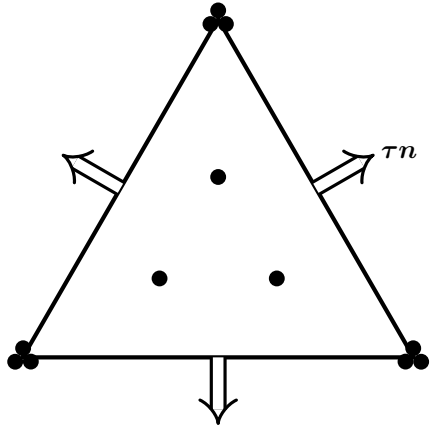
\begin{figure}
  \centering
  % Requires \usepackage{graphicx}
\begin{tikzpicture}[scale=3]
\path (0,0) coordinate (o);
\path (90:1cm) coordinate (p1);
\path (210:1cm) coordinate (p2);
\path (330:1cm) coordinate (p3);
\path (30:0.5cm) coordinate (m11);
\path (30:0.8cm) coordinate (m12);
\path (150:0.5cm) coordinate (m21);
\path (150:0.8cm) coordinate (m22);
\path (270:0.5cm) coordinate (m31);
\path (270:0.8cm) coordinate (m32);
\draw[ultra thick] (p1) -- (p2) -- (p3) -- cycle;
\fill (90:1cm+1.1pt) circle (1pt);
\fill (92:1cm-0.6pt) circle (1pt);
\fill (88:1cm-0.6pt) circle (1pt);

\fill (210:1cm+1.1pt) circle (1pt);
\fill (212:1cm-0.6pt) circle (1pt);
\fill (208:1cm-0.6pt) circle (1pt);

\fill (330:1cm+1.1pt) circle (1pt);
\fill (332:1cm-0.6pt) circle (1pt);
\fill (328:1cm-0.6pt) circle (1pt);

\draw[double distance=4pt, very thick,-implies] (m11) -- (m12);
\draw[double distance=4pt, very thick,-implies] (m21) -- (m22);
\draw[double distance=4pt, very thick,-implies] (m31) -- (m32);
\path (27:0.9cm) node {$\boldsymbol{\tau}\boldsymbol{n}$};

\fill (90:0.3cm) circle (1pt);
\fill (210:0.3cm) circle (1pt);
\fill (330:0.3cm) circle (1pt);
\end{tikzpicture}
  \caption{Element diagram for $\boldsymbol{\Sigma}_{2,h}$ in 2D}\label{fig: dofsStressk2n2}
\end{figure}
For the stabilized $P_{k}^{\rm div} - P_{k-1}^{-1}$ element, numerical errors $\|\boldsymbol{\sigma}-\boldsymbol{\sigma}_h\|_{\boldsymbol{H}(\mathbf{div},\mathcal{A})}$, $\|u_h\|_c$ and $\|\boldsymbol{u}-\boldsymbol{u}_h\|_0$ with respect to $h$ for $k=1, 2$ are shown in Tables~\ref{table:errorstabmixedk1}-\ref{table:errorstabmixedk2}, from which we can see that all the three errors achieve optimal convergence rates $O(h^k)$ numerically.
These results agree with the theoretical result in Theorem~\ref{thm:energyerror}.
Numerical results for the stabilized mixed finite element method~\eqref{continuousmfem1}-\eqref{continuousmfem2} with $k=1$ are listed in Table~\ref{table:errormodimixedk1}.
We find that the convergence rate of $\|\boldsymbol{\sigma}-\boldsymbol{\sigma}_h\|_{\boldsymbol{H}(\mathbf{div},\mathcal{A})}$ is $O(h)$, which coincides with
Theorem~\ref{thm:energyerror2}. It deserves to be mentioned that the convergence rate of $\|\boldsymbol{u}-\boldsymbol{u}_h\|_0$ in Table~\ref{table:errormodimixedk1} is higher than the theoretical result in Theorem~\ref{thm:energyerror2}, but still suboptimal.
Numerical results for the stabilized mixed finite element method~\eqref{continuousfullmfem1}-\eqref{continuousfullmfem2} with $k=1$ are given in Tables~\ref{table:errormodimixedfullk1}. It can be observed that both the convergence rates of $\|\boldsymbol{\sigma}-\boldsymbol{\sigma}_h\|_{\boldsymbol{H}(\mathbf{div},\mathcal{A})}$ and $\|\boldsymbol{u}-\boldsymbol{u}_h\|_0$ are $O(h^2)$,
as indicated by \eqref{eq:errorestimate4continuousfullmfem}.

\begin{table}[!h]
\tabcolsep 5pt \caption{Numerical errors for the stabilized $P_{1}^{0} - P_0^{-1}$ element in 2D.}\label{table:errorstabmixedk1} \vspace*{-12pt}
\begin{center}
\def\temptablewidth{0.91\textwidth}
{\rule{\temptablewidth}{1pt}}
\begin{tabular*}{\temptablewidth}{@{\extracolsep{\fill}}|c|c|c|c|c|c|c|}
\hline \rule{0em}{1.3em} $h$ & $\|\boldsymbol{\sigma}-\boldsymbol{\sigma}_h\|_{\boldsymbol{H}(\mathbf{div},\mathcal{A})}$ & order & $\|\boldsymbol{u}_h\|_c$ & order & $\|\boldsymbol{u}-\boldsymbol{u}_h\|_0$ & order \\ %[+0.8 ex]
\hline $2^{-1}$ & 1.9436E+01 & $-$ & 5.7136E+00 & $-$ & 2.8981E+00 & $-$ \\
\hline $2^{-2}$ & 1.0703E+01 & 0.86 & 3.7894E+00 & 0.59 & 1.6073E+00 & 0.85 \\
\hline $2^{-3}$ & 5.7982E+00 & 0.88 & 2.1600E+00 & 0.81 & 8.3356E-01 & 0.95 \\
\hline $2^{-4}$ & 3.0580E+00 & 0.92 & 1.1484E+00 & 0.91 & 4.2521E-01 & 0.97 \\
\hline $2^{-5}$ & 1.5780E+00 & 0.95 & 5.9220E-01 & 0.96 & 2.1527E-01 & 0.98 \\
\hline $2^{-6}$ & 8.0346E-01 & 0.97 & 3.0101E-01 & 0.98 & 1.0848E-01 & 0.99 \\
\hline $2^{-7}$ & 4.0590E-01 & 0.99 & 1.5187E-01 & 0.99 & 5.4494E-02 & 0.99 \\
\hline
       \end{tabular*}
       {\rule{\temptablewidth}{1pt}}
       \end{center}
\end{table}
\begin{table}[!h]
\tabcolsep 5pt \caption{Numerical errors for the stabilized $P_{2}^{\rm div} - P_1^{-1}$ element in 2D.}\label{table:errorstabmixedk2} \vspace*{-12pt}
\begin{center}
\def\temptablewidth{0.91\textwidth}
{\rule{\temptablewidth}{1pt}}
\begin{tabular*}{\temptablewidth}{@{\extracolsep{\fill}}|c|c|c|c|c|c|c|}
\hline \rule{0em}{1.3em} $h$ & $\|\boldsymbol{\sigma}-\boldsymbol{\sigma}_h\|_{\boldsymbol{H}(\mathbf{div},\mathcal{A})}$ & order & $\|\boldsymbol{u}_h\|_c$ & order & $\|\boldsymbol{u}-\boldsymbol{u}_h\|_0$ & order \\ %[+0.8 ex]
\hline $1$ & 1.1868E+01 & $-$ & 5.0478E+00 & $-$ & 2.4374E+00 & $-$ \\
\hline $2^{-1}$ & 4.6400E+00 & 1.35 & 1.7436E+00 & 1.53 & 7.1254E-01 & 1.77 \\
\hline $2^{-2}$ & 1.4841E+00 & 1.64 & 4.6132E-01 & 1.92 & 1.8285E-01 & 1.96 \\
\hline $2^{-3}$ & 4.2227E-01 & 1.81 & 1.1783E-01 & 1.97 & 4.6102E-02 & 1.99 \\
\hline $2^{-4}$ & 1.1120E-01 & 1.92 & 2.9546E-02 & 2.00 & 1.1556E-02 & 2.00 \\
\hline $2^{-5}$ & 2.8378E-02 & 1.97 & 7.3651E-03 & 2.00 & 2.8912E-03 & 2.00 \\
\hline $2^{-6}$ & 7.1562E-03 & 1.99 & 1.8358E-03 & 2.00 & 7.2294E-04 & 2.00 \\
\hline
       \end{tabular*}
       {\rule{\temptablewidth}{1pt}}
       \end{center}
\end{table}
\begin{table}[!h]
\tabcolsep 5pt \caption{Numerical errors for the stabilized $(P_1^0 + B_{2}^{\rm div}) - P_1^0$ element in 2D.}\label{table:errormodimixedk1} \vspace*{-12pt}
\begin{center}
\def\temptablewidth{0.7\textwidth}
{\rule{\temptablewidth}{1pt}}
\begin{tabular*}{\temptablewidth}{@{\extracolsep{\fill}}|c|c|c|c|c|}
\hline \rule{0em}{1.3em} $h$ & $\|\boldsymbol{\sigma}-\boldsymbol{\sigma}_h\|_{\boldsymbol{H}(\mathbf{div},\mathcal{A})}$ & order & $\|\boldsymbol{u}-\boldsymbol{u}_h\|_0$ & order \\ %[+0.8 ex]
\hline $2^{-1}$ & 1.3570E+01 & $-$ & 5.9057E+00 & $-$ \\
\hline $2^{-2}$ & 7.5576E+00 & 0.84 & 2.1407E+00 & 1.46 \\
\hline $2^{-3}$ & 4.1592E+00 & 0.86 & 6.2487E-01 & 1.78 \\
\hline $2^{-4}$ & 2.2977E+00 & 0.86 & 1.9626E-01 & 1.67 \\
\hline $2^{-5}$ & 1.2391E+00 & 0.89 & 6.2250E-02 & 1.66 \\
\hline $2^{-6}$ & 6.4969E-01 & 0.93 & 1.9087E-02 & 1.71 \\
\hline $2^{-7}$ & 3.3399E-01 & 0.96 & 5.7719E-03 & 1.73 \\
\hline
       \end{tabular*}
       {\rule{\temptablewidth}{1pt}}
       \end{center}
\end{table}
\begin{table}[!h]
\tabcolsep 5pt \caption{Numerical errors for the stabilized $P_2^{\rm div} - P_1^0$ element in 2D.}\label{table:errormodimixedfullk1} \vspace*{-12pt}
\begin{center}
\def\temptablewidth{0.7\textwidth}
{\rule{\temptablewidth}{1pt}}
\begin{tabular*}{\temptablewidth}{@{\extracolsep{\fill}}|c|c|c|c|c|}
\hline \rule{0em}{1.3em} $h$ & $\|\boldsymbol{\sigma}-\boldsymbol{\sigma}_h\|_{\boldsymbol{H}(\mathbf{div},\mathcal{A})}$ & order & $\|\boldsymbol{u}-\boldsymbol{u}_h\|_0$ & order \\ %[+0.8 ex]
\hline $1$ & 1.0966E+01 & $-$ & 6.0260E+00 & $-$ \\
\hline $2^{-1}$ & 3.5092E+00 & 1.64 & 1.5579E+00 & 1.95 \\
\hline $2^{-2}$ & 9.0380E-01 & 1.96 & 3.3148E-01 & 2.23 \\
\hline $2^{-3}$ & 2.2504E-01 & 2.01 & 7.2219E-02 & 2.20 \\
\hline $2^{-4}$ & 5.5922E-02 & 2.01 & 1.6506E-02 & 2.13 \\
\hline $2^{-5}$ & 1.3981E-02 & 2.00 & 4.1182E-03 & 2.00 \\
\hline $2^{-6}$ & 3.4746E-03 & 2.01 & 9.5159E-04 & 2.11 \\
\hline
       \end{tabular*}
       {\rule{\temptablewidth}{1pt}}
       \end{center}
\end{table}

Next we take into account the pure displacement problem on the unit cube $\Omega=(0,1)^3$ in 3D.
The exact solution is given by
\[
\boldsymbol{u}(x_1,x_2,x_3)=\left(\begin{array}{l}
2^4 \\
2^5 \\
2^6
\end{array}\right)x_1(1-x_1)x_2(1-x_2)x_3(1-x_3).
\]
Then the exact stress $\boldsymbol{\sigma}$ and the load function $\boldsymbol{f}$ are derived from \eqref{mixedform1}-\eqref{mixedform2}.
From Tables~\ref{table:errorstabmixedk13D}-\ref{table:errorstabmixedk23D}, it is easy to see that all the convergence rates of the errors $\|\boldsymbol{\sigma}-\boldsymbol{\sigma}_h\|_{\boldsymbol{H}(\mathbf{div},\mathcal{A})}$, $\|\boldsymbol u_h\|_c$ and $\|\boldsymbol{u}-\boldsymbol{u}_h\|_0$
for the stabilized $P_k^{\rm div} - P_{k-1}^{-1}$ with $k=1, 2$ are optimal, i.e. $O(h^k)$ assured by Theorem~\ref{thm:energyerror}.
The numerical convergence rates of the errors $\|\boldsymbol{\sigma}-\boldsymbol{\sigma}_h\|_{\boldsymbol{H}(\mathbf{div},\mathcal{A})}$ and $\|\boldsymbol{u}-\boldsymbol{u}_h\|_0$ for the stabilized $P_{k+1}^{\rm div}-P_{k}^0$ element with $k=1, 2$ are
presented in Tables~\ref{table:errormodimixedfullk13D}-\ref{table:errormodimixedfullk23D}. The numerical results in these two tables confirm the optimal rate of convergence result\eqref{eq:errorestimate4continuousfullmfem}.
\begin{table}[!h]
\tabcolsep 5pt \caption{Numerical errors for the stabilized $P_{1}^{0} - P_0^{-1}$ element in 3D.}\label{table:errorstabmixedk13D} \vspace*{-12pt}
\begin{center}
\def\temptablewidth{0.91\textwidth}
{\rule{\temptablewidth}{1pt}}
\begin{tabular*}{\temptablewidth}{@{\extracolsep{\fill}}|c|c|c|c|c|c|c|}
\hline \rule{0em}{1.3em} $h$ & $\|\boldsymbol{\sigma}-\boldsymbol{\sigma}_h\|_{\boldsymbol{H}(\mathbf{div},\mathcal{A})}$ & order & $\|\boldsymbol{u}_h\|_c$ & order & $\|\boldsymbol{u}-\boldsymbol{u}_h\|_0$ & order \\ %[+0.8 ex]
\hline $2^{-1}$ & 4.1723E+00 & $-$ & 4.0747E-01 & $-$ & 2.4720E-01 & $-$ \\
\hline $2^{-2}$ & 2.3595E+00 & 0.82 & 3.5554E-01 & 0.20 & 1.7403E-01 & 0.51 \\
\hline $2^{-3}$ & 1.2849E+00 & 0.88 & 2.5527E-01 & 0.48 & 1.1168E-01 & 0.64 \\
\hline $2^{-4}$ & 6.8023E-01 & 0.92 & 1.5243E-01 & 0.74 & 6.3889E-02 & 0.81 \\
\hline $2^{-5}$ & 3.5167E-01 & 0.95 & 8.3310E-02 & 0.87 & 3.4309E-02 & 0.90 \\
\hline
       \end{tabular*}
       {\rule{\temptablewidth}{1pt}}
       \end{center}
\end{table}
\begin{table}[!h]
\tabcolsep 5pt \caption{Numerical errors for the stabilized $P_{2}^{\rm div} - P_1^{-1}$ element in 3D.}\label{table:errorstabmixedk23D} \vspace*{-12pt}
\begin{center}
\def\temptablewidth{0.91\textwidth}
{\rule{\temptablewidth}{1pt}}
\begin{tabular*}{\temptablewidth}{@{\extracolsep{\fill}}|c|c|c|c|c|c|c|}
\hline \rule{0em}{1.3em} $h$ & $\|\boldsymbol{\sigma}-\boldsymbol{\sigma}_h\|_{\boldsymbol{H}(\mathbf{div},\mathcal{A})}$ & order & $\|\boldsymbol{u}_h\|_c$ & order & $\|\boldsymbol{u}-\boldsymbol{u}_h\|_0$ & order \\ %[+0.8 ex]
\hline $2^{-1}$ & 1.4440E+00 & $-$ & 1.7738E-01 & $-$ & 8.3035E-02 & $-$ \\
\hline $2^{-2}$ & 3.8864E-01 & 1.89 & 5.2337E-02 & 1.76 & 2.2979E-02 & 1.85 \\
\hline $2^{-3}$ & 9.9734E-02 & 1.96 & 1.3657E-02 & 1.94 & 5.9084E-03 & 1.96 \\
\hline $2^{-4}$ & 2.5160E-02 & 1.99 & 3.4507E-03 & 1.98 & 1.4873E-03 & 1.99 \\
\hline
       \end{tabular*}
       {\rule{\temptablewidth}{1pt}}
       \end{center}
\end{table}
\begin{table}[!h]
\tabcolsep 5pt \caption{Numerical errors for the stabilized $P_2^{\rm div} - P_1^0$ element in 3D.}\label{table:errormodimixedfullk13D} \vspace*{-12pt}
\begin{center}
\def\temptablewidth{0.7\textwidth}
{\rule{\temptablewidth}{1pt}}
\begin{tabular*}{\temptablewidth}{@{\extracolsep{\fill}}|c|c|c|c|c|}
\hline \rule{0em}{1.3em} $h$ & $\|\boldsymbol{\sigma}-\boldsymbol{\sigma}_h\|_{\boldsymbol{H}(\mathbf{div},\mathcal{A})}$ & order & $\|\boldsymbol{u}-\boldsymbol{u}_h\|_0$ & order \\ %[+0.8 ex]
\hline $2^{-1}$ & 1.4391E+00 & $-$ & 2.3509E-01 & $-$ \\
\hline $2^{-2}$ & 3.8148E-01 & 1.92 & 5.4959E-02 & 2.10 \\
\hline $2^{-3}$ & 9.6524E-02 & 1.98 & 1.1730E-02 & 2.23 \\
\hline $2^{-4}$ & 2.4182E-02 & 2.00 & 2.6368E-03 & 2.15 \\
\hline
       \end{tabular*}
       {\rule{\temptablewidth}{1pt}}
       \end{center}
\end{table}
\begin{table}[!h]
\tabcolsep 5pt \caption{Numerical errors for the stabilized $P_3^{\rm div} - P_2^0$ element in 3D.}\label{table:errormodimixedfullk23D} \vspace*{-12pt}
\begin{center}
\def\temptablewidth{0.7\textwidth}
{\rule{\temptablewidth}{1pt}}
\begin{tabular*}{\temptablewidth}{@{\extracolsep{\fill}}|c|c|c|c|c|}
\hline \rule{0em}{1.3em} $h$ & $\|\boldsymbol{\sigma}-\boldsymbol{\sigma}_h\|_{\boldsymbol{H}(\mathbf{div},\mathcal{A})}$ & order & $\|\boldsymbol{u}-\boldsymbol{u}_h\|_0$ & order \\ %[+0.8 ex]
\hline $2^{-1}$ & 2.7531E-01 & $-$ & 3.9149E-02 & $-$ \\
\hline $2^{-2}$ & 3.7035E-02 & 2.89 & 5.7416E-03 & 2.77 \\
\hline $2^{-3}$ & 4.7120E-03 & 2.97 & 7.8312E-04 & 2.87 \\
\hline
       \end{tabular*}
       {\rule{\temptablewidth}{1pt}}
       \end{center}
\end{table}

% ----------------------------------------------------------------
%\bibliographystyle{siam}
%\bibliography{paper}

\begin{thebibliography}{10}

\bibitem{AdamsCockburn2005}
{\sc S.~Adams and B.~Cockburn}, {\em A mixed finite element method for
  elasticity in three dimensions}, J. Sci. Comput., 25 (2005), pp.~515--521.

\bibitem{AmaraThomas1979}
{\sc M.~Amara and J.~M. Thomas}, {\em Equilibrium finite elements for the
  linear elastic problem}, Numer. Math., 33 (1979), pp.~367--383.

\bibitem{ArnoldAwanou2005}
{\sc D.~N. Arnold and G.~Awanou}, {\em Rectangular mixed finite elements for
  elasticity}, Math. Models Methods Appl. Sci., 15 (2005), pp.~1417--1429.

\bibitem{ArnoldAwanouWinther2008}
{\sc D.~N. Arnold, G.~Awanou, and R.~Winther}, {\em Finite elements for
  symmetric tensors in three dimensions}, Math. Comp., 77 (2008),
  pp.~1229--1251.

\bibitem{ArnoldAwanouWinther2014}
\leavevmode\vrule height 2pt depth -1.6pt width 23pt, {\em Nonconforming
  tetrahedral mixed finite elements for elasticity}, Math. Models Methods Appl.
  Sci., 24 (2014), pp.~783--796.

\bibitem{ArnoldBrezziDouglas1984}
{\sc D.~N. Arnold, F.~Brezzi, and J.~Douglas, Jr.}, {\em P{EERS}: a new mixed
  finite element for plane elasticity}, Japan J. Appl. Math., 1 (1984),
  pp.~347--367.

\bibitem{ArnoldDouglasGupta1984}
{\sc D.~N. Arnold, J.~Douglas, Jr., and C.~P. Gupta}, {\em A family of higher
  order mixed finite element methods for plane elasticity}, Numer. Math., 45
  (1984), pp.~1--22.

\bibitem{ArnoldFalkWinther2007}
{\sc D.~N. Arnold, R.~S. Falk, and R.~Winther}, {\em Mixed finite element
  methods for linear elasticity with weakly imposed symmetry}, Math. Comp., 76
  (2007), pp.~1699--1723.

\bibitem{ArnoldLee2014}
{\sc D.~N. Arnold and J.~J. Lee}, {\em Mixed methods for elastodynamics with
  weak symmetry}, SIAM J. Numer. Anal., 52 (2014), pp.~2743--2769.

\bibitem{ArnoldWinther2002}
{\sc D.~N. Arnold and R.~Winther}, {\em Mixed finite elements for elasticity},
  Numer. Math., 92 (2002), pp.~401--419.

\bibitem{ArnoldWinther2003a}
\leavevmode\vrule height 2pt depth -1.6pt width 23pt, {\em Nonconforming mixed
  elements for elasticity}, Math. Models Methods Appl. Sci., 13 (2003),
  pp.~295--307.

\bibitem{Awanou2012}
{\sc G.~Awanou}, {\em Two remarks on rectangular mixed finite elements for
  elasticity}, J. Sci. Comput., 50 (2012), pp.~91--102.

\bibitem{Boffi1994}
{\sc D.~Boffi}, {\em Stability of higher order triangular {H}ood-{T}aylor
  methods for the stationary {S}tokes equations}, Math. Models Methods Appl.
  Sci., 4 (1994), pp.~223--235.

\bibitem{Boffi1997}
{\sc D.~Boffi}, {\em Three-dimensional finite element methods for the {S}tokes
  problem}, SIAM J. Numer. Anal., 34 (1997), pp.~664--670.

\bibitem{BoffiBrezziFortin2009}
{\sc D.~Boffi, F.~Brezzi, and M.~Fortin}, {\em Reduced symmetry elements in
  linear elasticity}, Commun. Pure Appl. Anal., 8 (2009), pp.~95--121.

\bibitem{BoffiBrezziFortin2013}
\leavevmode\vrule height 2pt depth -1.6pt width 23pt, {\em Mixed finite element
  methods and applications}, vol.~44 of Springer Series in Computational
  Mathematics, Springer, Heidelberg, 2013.

\bibitem{BrennerScott2008}
{\sc S.~C. Brenner and L.~R. Scott}, {\em The mathematical theory of finite
  element methods}, vol.~15 of Texts in Applied Mathematics, Springer, New
  York, third~ed., 2008.

\bibitem{Brezzi1974}
{\sc F.~Brezzi}, {\em On the existence, uniqueness and approximation of
  saddle-point problems arising from {L}agrangian multipliers}, Rev. Fran\c
  caise Automat. Informat. Recherche Op\'erationnelle S\'er. Rouge, 8 (1974),
  pp.~129--151.

\bibitem{BrezziFortinMarini1993}
{\sc F.~Brezzi, M.~Fortin, and L.~D. Marini}, {\em Mixed finite element methods
  with continuous stresses}, Math. Models Methods Appl. Sci., 3 (1993),
  pp.~275--287.

\bibitem{Bustinza2006}
{\sc R.~Bustinza}, {\em A note on the local discontinuous {G}alerkin method for
  linear problems in elasticity}, Sci. Ser. A Math. Sci. (N.S.), 13 (2006),
  pp.~72--83.

\bibitem{CaiYe2005}
{\sc Z.~Cai and X.~Ye}, {\em A mixed nonconforming finite element for linear
  elasticity}, Numer. Methods Partial Differential Equations, 21 (2005),
  pp.~1043--1051.

\bibitem{ChenXie2016}
{\sc G.~Chen and X.~Xie}, {\em A {R}obust {W}eak {G}alerkin {F}inite {E}lement
  {M}ethod for {L}inear {E}lasticity with {S}trong {S}ymmetric {S}tresses},
  Comput. Methods Appl. Math., 16 (2016), pp.~389--408.

\bibitem{ChenWang2011}
{\sc S.-C. Chen and Y.-N. Wang}, {\em Conforming rectangular mixed finite
  elements for elasticity}, J. Sci. Comput., 47 (2011), pp.~93--108.

\bibitem{ChenHuangHuangXu2010}
{\sc Y.~Chen, J.~Huang, X.~Huang, and Y.~Xu}, {\em On the local discontinuous
  {G}alerkin method for linear elasticity}, Math. Probl. Eng.,  (2010),
  pp.~Art. ID 759547, 20.

\bibitem{Ciarlet1978}
{\sc P.~G. Ciarlet}, {\em The finite element method for elliptic problems},
  North-Holland Publishing Co., Amsterdam, 1978.
\newblock Studies in Mathematics and its Applications, Vol. 4.

\bibitem{CockburnGopalakrishnanGuzman2010}
{\sc B.~Cockburn, J.~Gopalakrishnan, and J.~Guzm{\'a}n}, {\em A new elasticity
  element made for enforcing weak stress symmetry}, Math. Comp., 79 (2010),
  pp.~1331--1349.

\bibitem{CockburnSchotzauWang2006}
{\sc B.~Cockburn, D.~Sch{\"o}tzau, and J.~Wang}, {\em Discontinuous {G}alerkin
  methods for incompressible elastic materials}, Comput. Methods Appl. Mech.
  Engrg., 195 (2006), pp.~3184--3204.

\bibitem{CockburnShi2013}
{\sc B.~Cockburn and K.~Shi}, {\em Superconvergent {HDG} methods for linear
  elasticity with weakly symmetric stresses}, IMA J. Numer. Anal., 33 (2013),
  pp.~747--770.

\bibitem{DiPietroErn2015}
{\sc D.~A. Di~Pietro and A.~Ern}, {\em A hybrid high-order locking-free method
  for linear elasticity on general meshes}, Comput. Methods Appl. Mech. Engrg.,
  283 (2015), pp.~1--21.

\bibitem{Veubeke1965}
{\sc B.~Fraeijs~de Veubeke}, {\em Displacement and equilibrium models in the
  finite element method}, in Stress Analysis, O.~Zienkiewicz and G.~S.
  Holister, eds., John Wiley \& Sons, New York, 1965, ch.~9, pp.~145--197.

\bibitem{GongWuXu2015}
{\sc S.~Gong, S.~Wu, and J.~Xu}, {\em The lowest order interior penalty
  nonconforming finite element methods for linear elasticity},
  arXiv:1507.01752,  (2015).

\bibitem{GopalakrishnanGuzman2011}
{\sc J.~Gopalakrishnan and J.~Guzm{\'a}n}, {\em Symmetric nonconforming mixed
  finite elements for linear elasticity}, SIAM J. Numer. Anal., 49 (2011),
  pp.~1504--1520.

\bibitem{GopalakrishnanGuzman2012}
\leavevmode\vrule height 2pt depth -1.6pt width 23pt, {\em A second elasticity
  element using the matrix bubble}, IMA J. Numer. Anal., 32 (2012),
  pp.~352--372.

\bibitem{Guzman2010}
{\sc J.~Guzm{\'a}n}, {\em A unified analysis of several mixed methods for
  elasticity with weak stress symmetry}, J. Sci. Comput., 44 (2010),
  pp.~156--169.

\bibitem{HarderMadureiraValentin2016}
{\sc C.~Harder, A.~L. Madureira, and F.~Valentin}, {\em A hybrid-mixed method
  for elasticity}, ESAIM Math. Model. Numer. Anal., 50 (2016), pp.~311--336.

\bibitem{Hu2015a}
{\sc J.~Hu}, {\em Finite element approximations of symmetric tensors on
  simplicial grids in {$\Bbb R^n$}: the higher order case}, J. Comput. Math.,
  33 (2015), pp.~283--296.

\bibitem{Hu2015}
\leavevmode\vrule height 2pt depth -1.6pt width 23pt, {\em A new family of
  efficient conforming mixed finite elements on both rectangular and cuboid
  meshes for linear elasticity in the symmetric formulation}, SIAM J. Numer.
  Anal., 53 (2015), pp.~1438--1463.

\bibitem{HuManZhang2014}
{\sc J.~Hu, H.~Man, and S.~Zhang}, {\em A simple conforming mixed finite
  element for linear elasticity on rectangular grids in any space dimension},
  J. Sci. Comput., 58 (2014), pp.~367--379.

\bibitem{HuShi2007a}
{\sc J.~Hu and Z.-C. Shi}, {\em Lower order rectangular nonconforming mixed
  finite elements for plane elasticity}, SIAM J. Numer. Anal., 46 (2007),
  pp.~88--102.

\bibitem{HuZhang2015c}
{\sc J.~Hu and S.~Zhang}, {\em A family of conforming mixed finite elements for
  linear elasticity on triangular grids}, arXiv:1406.7457,  (2015).

\bibitem{HuZhang2015}
\leavevmode\vrule height 2pt depth -1.6pt width 23pt, {\em A family of
  symmetric mixed finite elements for linear elasticity on tetrahedral grids},
  Sci. China Math., 58 (2015), pp.~297--307.

\bibitem{HuZhang2015b}
\leavevmode\vrule height 2pt depth -1.6pt width 23pt, {\em Finite element
  approximations of symmetric tensors on simplicial grids in {$\Bbb R^n$}: the
  lower order case}, arXiv:1412.0216,  (2015).

\bibitem{HuangHuang2016}
{\sc J.~Huang and X.~Huang}, {\em The $hp$-version error analysis of a mixed
  {DG} method for linear elasticity}, arXiv:1608.04060,  (2016).

\bibitem{Huang2013a}
{\sc X.~Huang}, {\em A reduced local discontinuous {G}alerkin method for nearly
  incompressible linear elasticity}, Math. Probl. Eng.,  (2013), pp.~Art. ID
  546408, 11.

\bibitem{HuangHuang2013a}
{\sc X.~Huang and J.~Huang}, {\em The compact discontinuous {G}alerkin method
  for nearly incompressible linear elasticity}, J. Sci. Comput., 56 (2013),
  pp.~291--318.

\bibitem{JohnsonMercier1978}
{\sc C.~Johnson and B.~Mercier}, {\em Some equilibrium finite element methods
  for two-dimensional elasticity problems}, Numer. Math., 30 (1978),
  pp.~103--116.

\bibitem{ManHuShi2009}
{\sc H.-Y. Man, J.~Hu, and Z.-C. Shi}, {\em Lower order rectangular
  nonconforming mixed finite element for the three-dimensional elasticity
  problem}, Math. Models Methods Appl. Sci., 19 (2009), pp.~51--65.

\bibitem{Morley1989}
{\sc M.~E. Morley}, {\em A family of mixed finite elements for linear
  elasticity}, Numer. Math., 55 (1989), pp.~633--666.

\bibitem{QiuDemkowicz2009}
{\sc W.~Qiu and L.~Demkowicz}, {\em Mixed {$hp$}-finite element method for
  linear elasticity with weakly imposed symmetry}, Comput. Methods Appl. Mech.
  Engrg., 198 (2009), pp.~3682--3701.

\bibitem{QiuDemkowicz2011}
\leavevmode\vrule height 2pt depth -1.6pt width 23pt, {\em Mixed {$hp$}-finite
  element method for linear elasticity with weakly imposed symmetry: stability
  analysis}, SIAM J. Numer. Anal., 49 (2011), pp.~619--641.

\bibitem{QiuShenShi2013}
{\sc W.~Qiu, J.~Shen, and K.~Shi}, {\em An hdg method for linear elasticity
  with strong symmetric stresses}, arXiv:1312.1407,  (2013).

\bibitem{ScottZhang1990}
{\sc L.~R. Scott and S.~Zhang}, {\em Finite element interpolation of nonsmooth
  functions satisfying boundary conditions}, Math. Comp., 54 (1990),
  pp.~483--493.

\bibitem{Stenberg1988}
{\sc R.~Stenberg}, {\em A family of mixed finite elements for the elasticity
  problem}, Numer. Math., 53 (1988), pp.~513--538.

\bibitem{TaylorHood1973}
{\sc C.~Taylor and P.~Hood}, {\em A numerical solution of the {N}avier-{S}tokes
  equations using the finite element technique}, Internat. J. Comput. \&
  Fluids, 1 (1973), pp.~73--100.

\bibitem{WangWangWangZhang2016}
{\sc C.~Wang, J.~Wang, R.~Wang, and R.~Zhang}, {\em A locking-free weak
  {G}alerkin finite element method for elasticity problems in the primal
  formulation}, J. Comput. Appl. Math., 307 (2016), pp.~346--366.

\bibitem{WatwoodHartz1968}
{\sc V.~B. Watwood~Jr. and B.~J. Hartz}, {\em An equilibrium stress field model
  for finite element solutions of two-dimensional elastostatic problems},
  International Journal of Solids and Structures, 4 (1968), pp.~857 -- 873.

\bibitem{XieXu2011}
{\sc X.~Xie and J.~Xu}, {\em New mixed finite elements for plane elasticity and
  {S}tokes equations}, Sci. China Math., 54 (2011), pp.~1499--1519.

\bibitem{Yi2005}
{\sc S.-Y. Yi}, {\em Nonconforming mixed finite element methods for linear
  elasticity using rectangular elements in two and three dimensions}, Calcolo,
  42 (2005), pp.~115--133.

\bibitem{Yi2006}
{\sc S.-Y. Yi}, {\em A new nonconforming mixed finite element method for linear
  elasticity}, Math. Models Methods Appl. Sci., 16 (2006), pp.~979--999.

\end{thebibliography}

\end{document}